\documentclass[a4paper,twoside,10pt]{article}
\usepackage{authblk}
\usepackage{fancyhdr}
\usepackage{amsmath}
\usepackage{amsthm}
\usepackage{amsfonts}
\usepackage[T1]{fontenc}

\theoremstyle{plain}
\newtheorem{theorem}{Theorem}[section]
\newtheorem{corollary}[theorem]{Corollary}
\newtheorem{lemma}[theorem]{Lemma}
\newtheorem{proposition}[theorem]{Proposition}

\theoremstyle{definition}
\newtheorem{definition}[theorem]{Definition}

\newtheorem{example}[theorem]{Example}
\newtheorem{exercise}[theorem]{Exercise}

\newenvironment{result}[1]%
{\medskip\noindent\textbf{#1.}~\it}{\medskip}

%
\newcommand{\N}{\mathbb N}
\newcommand{\Z}{\mathbb Z}

\newcommand{\R}{\mathbb R}
\newcommand{\C}{\mathbb C}

\newcommand{\sign}{\mathop\mathrm{sign}\nolimits}

\newcommand{\dist}{\mathop\mathrm{dist}\nolimits}

\newcommand{\GL}{\mathrm{GL}}

\newcommand{\cl}[1]{\overline{#1}}
\renewcommand{\L}{\mathrm{L}}
\renewcommand{\i}{\mathrm{i}}



\pagestyle{fancyplain}

\lhead[\fancyplain{}{\bfseries\thepage}]%
{\fancyplain{}{\bfseries An introduction to topological degree in Euclidean spaces}}
\rhead[\fancyplain{}{\bfseries P.\ Benevieri --- M.\ Furi --- M.P.\ Pera --- M.\ Spadini}]%
{\fancyplain{}{\bfseries\thepage}}
\cfoot{}

\author[$1$]{Pierluigi Benevieri}
\author[$2$]{Massimo Furi}
\author[$3$]{Maria Patrizia Pera}
\author[$4$]{Marco Spadini}

\affil[$1$]{\small Instituto de Matem\'atica e Estat\'istica, Universidade de S\~ao Paulo, Brasil}
\affil[$2$,$3$,$4$]{\small Dipartimento di Matematica e Informatica ``Ulisse Dini'', Universit\`a di Firenze, Italy}
\title{An introduction to topological degree in Euclidean spaces}
\date{}

\numberwithin{equation}{section}

\begin{document}
    
\maketitle


\section{Introduction}

This paper aims to provide a careful and self-contained introduction to the theory
of topological degree in Euclidean spaces. It is intended for people mostly interested
in analysis and, in general, a heavy background in algebraic or differential topology
is not required. 

Roughly speaking, our construction of the topological degree can be summarized in a 
few steps. We first define a notion of degree for the special case of \emph{regular 
triples} that is for triples $(f,U,y)$ where $f$ is an $\R^k$-valued smooth
function defined (at least) on the closure $\cl{U}$ of the open set $U\subseteq\R^k$ 
and proper on $\cl{U}$, and $y\in\R^k$ is a regular value for $f$ in $U$. We then 
proceed to the definition of degree in the general case of \emph{admissible triples} 
when $f$ is assumed only continuous and proper on $\cl{U}$, and $y$ is any point 
in $\R^k\setminus f(\partial U)$. Lastly, we consider the so-called extended case of
the \emph{weakly admissible triples}, that is when $f$ is defined (and continuous) 
at least on $U$ and $y\in\R^k$ is such that $f^{-1}(y)\cap U$ is compact.

Our approach emphasizes the importance of three fundamental properties of topological
degree: Normalization, Additivity, and Homotopy Invariance (see below). Actually, 
these properties determine the notion of degree in a unique way yielding a computation
formula for the degree valid for admissible triples $(f,U,y)$ such that $f$ is 
Fr\'echet differentiable in any $x\in f^{-1}(y)$. This allows an alternative approach.

This paper is organized as follows: Section \ref{pre} gathers some results and notions 
needed for the following sections. In Section \ref{BdEs} the notion of degree both for 
regular triples and for admissible triples is defined, and the main consequences of 
the three above mentioned fundamental properties are explored. Section \ref{excase} is 
devoted to the notion of degree for weakly admissible triples, while Section 
\ref{appendix} contains the (lengthy) proof of the Homotopy Invariance Property for 
regular triples.
\smallskip

A word of caution:
unless differently stated, \emph{all the maps considered in this paper are continuous}.
We also recall that we say that a function defined on an arbitrary subset $X$ of $\R^k$
is $C^\infty$ (resp.\ $C^r$ with $1\leq r<\infty$), if it admits a $C^\infty$ (resp.\ 
$C^r$) extension to an open neighborhood of $X$.

\section{Preliminaries}\label{pre}

Let $f: U \to \R^s$ be a $C^1$ map defined on an open subset of $\R^k$. An
element $x \in U$ is called a \emph{critical point} (\emph{of} $f$) if the
Fr\'echet derivative $f'(x) \in \L(\R^k,\R^s)$ is not onto; otherwise $x$ is a
\emph{regular point}. An element $y \in \R^s$ is a \emph{critical value} if
$f^{-1}(y)$ contains critical points; otherwise $y$ is a \emph{regular value}. 

To avoid confusion, \emph{points} are in the source space and \emph{values} in
the target space.

Observe that if $k < s$, then any $x \in U$ is a critical point.
Consequently, $f(U)$ coincides with the set of critical values of $f$.

A very important special case is when $k = s$. In this context, $x \in
U$ is a regular point (of $f$) if and only if the Jacobian of $f$ at $x$,
$\det(f'(x))$, is nonzero. When this holds, the sign of $\det(f'(x))$ is
called the \emph{index of $f$ at $x$} and denoted $\i(f,x)$. 
Actually, the index $\i(f,x)$ is defined as $\sign(\det(f'(x)))$ even if $f$
is simply continuous, provided it is Fr\'echet differentiable at $x$ with
invertible derivative.

\begin{exercise}
\label{exercise_polynomial_map}
Let $p$ be a complex polynomial and regard $p$ as a map from $\R^2$ into
itself. Show that $z \in \C \cong \R^2$ is a critical point of $p$ if and
only if it is a root of the polynomial $p'$. Prove that $\i(p,z)=1$ 
for any regular point $z \in \C$.
\end{exercise}

The following result is of crucial importance in degree theory. (See e.g.,
\cite{Sar} or \cite{Mil}.) 

\begin{result}{Sard's Lemma}
Let $f: U \to \R^s$ be a $C^n$ map defined on an open subset of $\R^k$.
If $n > \max\{0, k-s\}$, then the set of critical values of $f$ has
($s$-dimensional) Lebesgue measure zero. In particular, the set of regular
values of $f$ is dense in $\R^s$.
\end{result}

Observe that, in view of Sard's Lemma, a $C^1$ curve $\alpha: [a,b] \to \R^s$,
$s > 1$, cannot be a Peano curve (i.e.\ a curve whose image contains interior
points).

\begin{definition}
\label{definition_proper}
A map $f: X \to Y$ between two metric spaces is \emph{proper} if
$f^{-1}(K)$ is compact whenever $K \subseteq Y$ is compact.
\end{definition}

Clearly, if $X$ is compact, then $f$ is proper (any map is assumed to be
continuous).

\begin{exercise}
\label{exercise_proper_is_closed}
Show that if $f: X \to Y$ is proper, then it is a closed map (that
is, $f(A)$ is closed whenever $A \subseteq X$ is closed).
\end{exercise}

\begin{exercise}
\label{exercise_proper_map}
Let $X \subseteq \R^k$ be closed and unbounded. Prove that a map $f: X \to
\R^s$ is proper if and only if
\[
\lim_{x \in X,\; |x| \to +\infty} |f(x)| = +\infty.
\]
\end{exercise}

\begin{example}
\label{example_polynomial_is_proper}
Let $p: \C \to \C$ be a non-constant complex polynomial. Then
$
\lim_{|z| \to +\infty} |p(z)| = +\infty.
$
Thus, $p$ is a proper map.
\end{example}

\medskip
In the following we will need to approximate continuous functions with
more regular ones. To do that, we shall make use of the following approximation 
theorem

\begin{result}{Smooth Approximation Theorem}
Let $U\subseteq\R^k$ be open, and let $f$ be an $\R^s$-valued (continuous) function 
defined on the closure $\cl{U}$ of $U$ in $\R^k$. Then, given  a continuous
function $\varepsilon:\cl{U}\to(0,\infty)$, there exists a $C^\infty$ function 
$g:\cl{U}\to\R^s$ such that $|f(x)-g(x)|<\varepsilon(x)$ for any $x\in\cl{U}$.
\end{result}

This fact could be proved directly. However, since any continuous function defined 
on a closed subset of $\R^k$ with values in $\R^s$ can be extended to a continuous 
function on $\R^k$ (this is a consequence of the well-known Tietze extension 
Theorem, see e.g., \cite{Dug}), the approximation result just stated can be deduced 
from more known theorems valid for maps defined on open sets, see e.g., 
\cite{Hir}.

\section{Brouwer degree in Euclidean spaces}\label{BdEs}
\subsection{The special case}

Let $U$ be an open subset of $\R^k$, $f$ an $\R^k$\mbox{-}valued map defined
(at least) on the closure $\cl U$ of $U$, and $y \in \R^k$.

\begin{definition}
\label{definition_admissible_triple}
The triple $(f,U,y)$ is said to be \emph{admissible} (for the Brouwer degree
in $\R^k$) provided that $f$ is proper on $\cl U$ and $f(x) \neq y$,
$\forall x \in \partial U$.
\end{definition}

Notice that, according to Exercise \ref{exercise_proper_is_closed},
$f(\partial U)$ is a closed subset of $\R^k$.

\begin{definition}
\label{definition_regular_triple}
An admissible triple $(f,U,y)$ is said to be \emph{regular} if
$f$ is $C^\infty$, and $y$ is a regular value for $f$ in $U$.
\end{definition}

We point out that if $(f,U,y)$ is a regular triple, then the set $f^{-1}(y)
\cap U$ is finite. In fact, $f^{-1}(y) \cap \cl U$ is compact ($f$ being
proper on $\cl U$), it is contained in $U$ (since $y \notin f(\partial
U)$) and it is discrete (because of the Inverse Function Theorem). 
This justifies the following definition of degree for the special case of a
regular triple.

\begin{definition}
\label{definition_degree_regular}
The \emph{Brouwer degree} of a regular triple $(f,U,y)$ is the integer
\begin{equation}
\label{formula_degree_regular}
\deg(f,U,y)\; := \sum_{x \in f^{-1}(y) \cap U} \i(f,x)
\end{equation}
\end{definition}

In some sense the Brouwer degree of a regular triple $(f,U,y)$ is an algebraic
count of the number of solutions in $U$ of the equation $f(x) = y$. This
integer, as we shall see, turns out to depend only on the connected
component of $\R^k \setminus f(\partial U)$ containing the regular value $y$.
This is not so for the absolute count of the solutions (i.e.\ the cardinality
$\#f^{-1}(y)$ of the set $f^{-1}(y)$), as it happens, for example, to the
proper map $f: \R \to \R$ given by $f(x) = x^2$. Incidentally, observe that
in this case we have $\deg(f,\R,y) = 0$ for any regular value $y \in \R$
(i.e.\ for any $y \neq 0$). 

Notice that the notation $\deg(f,U,y)$ is not redundant, since $\cl U$
can be strictly contained in the domain of $f$ (which is uniquely associated
with $f$). For example, if $\deg(f,U,y)$ is defined and $V$ is an
open subset of $U$ such that $f^{-1}(y) \cap \partial V = \emptyset$, then
also $\deg(f,V,y)$ is defined (and depends only on the restriction of $f$ to
$V$). 

Observe also that $(f,U,y)$ is a regular triple if and only if so is
$(f-y,U,0)$,  where $f-y$ stands for the map $x \mapsto f(x)-y$. Obviously,
when this holds, one has
\[
\deg(f,U,y) = \deg(f-y,U,0).
\]

\begin{example}
Given a positive integer $n$, let $p_n: \C \to \C$ be the map defined
by $p_n(z) = z^n$. Identifying $\C$ with $\R^2$, $0 \in \C$ is the only
critical point of $p_n$ (see Exercise \ref{exercise_polynomial_map}).
Therefore, $0 = p_n(0)$ is the unique critical value and, consequently,
recalling that $p_n$ is a proper map (see Exercise
\ref{exercise_proper_map}), $\deg(p_n,\C,w)$ is defined for any $w \in \C
\setminus \{0\}$. Since any $w \neq 0$ admits exactly $n$ different
$n$\mbox{-}roots, Exercise \ref{exercise_polynomial_map} shows that
$\deg(p_n,\C,w) = n$ for all $w \neq 0$. It is therefore natural to extend
the function $w \mapsto \deg(p_n,\C,w)$ by putting $\deg(p_n,\C,w) = n$ even
for $w = 0$ (this will be a consequence of the general definition of degree).
\end{example}

Theorem \ref{theorem_fundamental_properties_regular} below collects \emph{the
three fundamental properties of the degree for regular triples}. The first
two, the \emph{Normalization} and the \emph{Additivity}, are a straightforward
consequence of the definition; the third one, the \emph{Homotopy Invariance},
is crucial for the construction of the degree in the general case, is
nontrivial and (to please the impatient reader) will be proved in the appendix
to this chapter. As we shall see later, there exists at most one
integer-valued function (defined on the set of all regular triples)
satisfying the three fundamental properties.

In order to simplify the statement of Theorem
\ref{theorem_fundamental_properties_regular}, it is convenient to introduce
the following notion.

\begin{definition}
\label{definition_admissible_homotopy}
Let $U$ be an open subset of $\R^k$, $H$ an $\R^k$\mbox{-}valued map defined
(at least) on $\cl U \times [0,1]$, and $\alpha: [0,1] \to \R^k$ a path.
The triple $(H,U,\alpha)$ is said to be a \emph{homotopy of triples} (on
$U$, joining $(H(\cdot,0),U,\alpha(0))$ with $(H(\cdot,1),U,\alpha(1))$\,).
If, in addition, $H$ is proper on $\cl U \times [0,1]$ and $H(x,\lambda)
\neq \alpha(\lambda)$ for all $(x,\lambda) \in \partial U \times [0,1]$, then
$(H,U,\alpha)$ is called an \emph{admissible homotopy} (of triples).
If both $H$ and $\alpha$ are smooth maps, then $(H,U,\alpha)$ is said to be
\emph{smooth}.
\end{definition}

\begin{theorem}
\label{theorem_fundamental_properties_regular}
The degree for regular triples satisfies the following three fundamental
properties:
\begin{itemize}
\item[] \emph{(Normalization)} $\deg(I,\R^k,0) = 1$, where $I$
denotes the identity on $\R^k$;
\item[] \emph{(Additivity)} if $(f,U,y)$ is regular, and $U_1$ and $U_2$ are
two disjoint open subsets of $U$ such that $f^{-1}(y) \cap U \subseteq U_1
\cup U_2$, then
\[
\deg(f,U,y) = \deg(f,U_1,y) + \deg(f,U_2,y);
\]
\item[] \emph{(Homotopy Invariance)} if $(H,U,\alpha)$ is a smooth
admissible homotopy joining two regular triples, then
\[
\deg(H(\cdot,0),U,\alpha(0)) = \deg(H(\cdot,1),U,\alpha(1)).
\]
\end{itemize}
\end{theorem}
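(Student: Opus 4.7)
The plan is to dispatch Normalization and Additivity directly from the definition and then devote the real work to Homotopy Invariance. For Normalization, $I^{-1}(0) = \{0\}$ and $\sign \det I'(0) = 1$, so \eqref{formula_degree_regular} gives $1$. For Additivity, the only non-trivial task is to verify that $(f, U_1, y)$ and $(f, U_2, y)$ are themselves regular triples: properness on each $\cl{U_j}$ and regularity of $y$ on $U_j$ are inherited from the corresponding properties on $U$; admissibility follows because a hypothetical $x \in \partial U_j$ with $f(x) = y$ would, by admissibility of $(f, U, y)$, have to lie in $U$, hence in $U_{3-j}$ by the inclusion hypothesis, contradicting $x \in \cl{U_j}$ since $U_j$ and $U_{3-j}$ are disjoint open sets. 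The sum in \eqref{formula_degree_regular} then splits using $f^{-1}(y) \cap U \subseteq U_1 \cup U_2$ together with disjointness.

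Homotopy Invariance is the heart of the theorem. My plan is to introduce the auxiliary smooth map $\Phi: \cl U \times [0, 1] \to \R^k$ given by $\Phi(x, \lambda) := H(x, \lambda) - \alpha(\lambda)$. Admissibility of $(H, U, \alpha)$ translates into $\Phi$ being proper on $\cl U \times [0, 1]$ and nowhere zero on $\partial U \times [0, 1]$, so $\Phi^{-1}(0)$ is a compact subset of $U \times [0, 1]$. I would first secure a regular-value hypothesis: by Sard's Lemma, after replacing $0$ by a nearby regular value $y^*$ (and using a short continuity-in-$y$ argument, based on the Inverse Function Theorem around each solution, to see that the endpoint degrees are not disturbed), one may assume $0$ is a regular value of $\Phi$ on the entire slab. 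Then $\Phi^{-1}(0)$ is a compact smooth $1$-manifold whose boundary lies in $U \times \{0, 1\}$; its components are circles and arcs. Orienting each arc by a tangent-vector rule keyed to the sign of $\det \Phi_x$ on transverse slices, I would check that the two endpoints of each arc contribute to the slice-indices either with opposite signs (when they lie in a common slice) or with matching signs (when they bridge $\lambda = 0$ and $\lambda = 1$); summing over arcs yields $\deg(H(\cdot, 0), U, \alpha(0)) = \deg(H(\cdot, 1), U, \alpha(1))$.

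The main obstacle is the orientation bookkeeping on $\Phi^{-1}(0)$: one must show carefully that the canonical orientation of each arc assigns signs to its endpoints that agree with the indices appearing in formula \eqref{formula_degree_regular}. This step requires a precise implicit-function-theorem description of $\Phi^{-1}(0)$ near each point, together with a careful analysis of how the sign of $\det \Phi_x$ varies along the $1$-manifold, and it is exactly this computation that the authors defer to the appendix. The companion perturbation/continuity-in-$y$ step is technically routine but must be spelled out to close the argument cleanly.
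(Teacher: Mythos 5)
Your outline follows the paper's proof in every essential respect: Normalization and Additivity fall out of the definition (your verification that the restrictions $(f,U_j,y)$ are themselves regular triples is a correct and slightly more explicit version of what the paper takes for granted); and for Homotopy Invariance you set up the same auxiliary map $\Phi(x,\lambda) = H(x,\lambda) - \alpha(\lambda)$, invoke Sard plus a local-constancy argument (the paper's Lemma~\ref{lemma_locally_constant}) to arrange that $0$ is simultaneously a regular value for $\Phi$ on the slab and for the two boundary slices, and then use the classification of compact $1$-manifolds with boundary to reduce to a sign-matching statement about the endpoints of the arcs in $\Phi^{-1}(0)$. You have correctly identified that the whole difficulty lies in showing that the two endpoints of an arc carry opposite signs when they lie on the same side $\lambda=0$ or $\lambda=1$, and equal signs when they bridge the two sides. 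However, you only announce that step; you do not carry it out. The paper's appendix does exactly this by constructing, along each arc $\gamma(t)$, a continuously varying ordered basis $B_t = (\Sigma(p_t,v_t^\perp), v_t)$ of $\R^{k+1}$, where $v_t$ interpolates from $e_{k+1}$ through the tangent $\gamma'(t)$ (after checking $\lambda'(0)>0$ and $\lambda'(1)<0$) to $-e_{k+1}$, and $\Sigma(p_t,v_t^\perp)$ is the preimage of the standard basis of $\R^k$ under the restricted isomorphism $F'(p_t)|_{v_t^\perp}$. Because $B_t$ is a basis for all $t$, its orientation status cannot change, and comparing $B_0$ and $B_1$ (whose last vectors are $\pm e_{k+1}$) forces the two endpoint indices to have opposite sign. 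That moving-frame argument is the substance you flagged as "the main obstacle" and left open; your sketch is the right plan but does not close it.
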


\subsection{The general case}
The Brouwer degree, preliminarily defined for regular triples, can be
extended to the larger class of admissible triples; where, we recall, a
triple $(f,U,y)$ is admissible (for the degree in Euclidean spaces) provided
that $U$ is an open subset of $\R^k$, $f$ is an $\R^k$\mbox{-}valued map which
is proper on $\cl U$, and $y \in \R^k$ does not belong to the (possibly
empty) set $f(\partial U)$.

The passage from the regular to the admissible case can be made in one big
step or, as usual, in two small steps (the intermediate stage regarding
admissible triples $(f,U,y)$ with $f$ smooth). We will reach the goal in just
one step, but in a way that the reader interested only in the smooth case can
easily imagine how to perform the first small step, which consists in
removing the assumption that the value $y$ in the triple $(f,U,y)$ is
regular.

\medskip
Before giving the definition of degree in the general case, we need some
preliminaries.

Let $f$ and $g$ be two $\R^s$\mbox{-}valued maps defined (at least) on a
subset $X$ of $\R^k$. Given $\epsilon > 0$, we say that \emph{$f$ is
$\epsilon$\mbox{-}close to $g$ in $X$} if  $|f(x) - g(x)| \le
\epsilon$, $\forall x \in X$. Moreover, given $y, z \in \R^s$, \emph{$y$ is 
$\epsilon$\mbox{-}close to $z$}, provided that $|y-z| \le \epsilon$.

Observe that if $(f,U,y)$ is an admissible triple and $g$ is
$\epsilon$\mbox{-}close to $f$ in $\cl U$ for some $\epsilon > 0$, then
$g$ is proper on $\cl U$ (see Exercise \ref{exercise_proper_map}). If,
in addition, $\epsilon < \dist(y,f(\partial U))$,\footnote{Recall the
convention $\inf \emptyset = +\infty$, which implies $\dist(y,\emptyset) =
+\infty$} then $y \notin g(\partial U)$, and in this case also the triple
$(g,U,y)$ is admissible. More generally, if $z \in \R^k$ is
$\sigma$\mbox{-}close to $y$ and $\epsilon + \sigma < \dist(y,f(\partial
U))$, then $(g,U,z)$ is admissible as well.

\begin{definition}
\label{definition_degree_admissible}
The degree of an admissible triple $(f,U,y)$, also called \emph{degree
of $f$ in $U$ at $y$}, is the integer
\[
\deg(f,U,y) := \deg(g,U,z),
\]
where $(g,U,z)$ is any regular triple with the following properties:
\begin{enumerate}
\item $g$ is $\epsilon$\mbox{-}close to $f$;
\item $z$ is $\sigma$\mbox{-}close to $y$;
\item $\epsilon + \sigma < \dist(y,f(\partial U))$.
\end{enumerate}
\end{definition}

Clearly, given $(f,U,y)$ admissible, the existence of a regular triple
$(g,U,z)$ as in Definition \ref{definition_degree_admissible} is ensured
by the Smooth Approximation Theorem (which shows the existence of $g$) and
Sard's Lemma (which shows the existence of $z$).

The following consequence of Theorem
\ref{theorem_fundamental_properties_regular} guarantees that this
Definition is actually well posed.

\begin{corollary}
\label{corollary_locally_constant}
Let $(f,U,y)$ be an admissible triple. Then,
\[
\deg(g_0,U,z_0) = \deg(g_1,U,z_1)
\]
for any pair of regular triples $(g_0,U,z_0)$ and $(g_1,U,z_1)$ satisfying
the following conditions:
\begin{enumerate}
\item $g_0$ and $g_1$ are $\epsilon$\mbox{-}close to $f$;
\item $z_0$ and $z_1$ are $\sigma$\mbox{-}close to $y$;
\item $\epsilon + \sigma < \dist(y,f(\partial U))$.
\end{enumerate}
\end{corollary}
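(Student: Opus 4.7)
The plan is to connect $(g_0,U,z_0)$ and $(g_1,U,z_1)$ by a smooth admissible homotopy of triples and then invoke the Homotopy Invariance property from Theorem \ref{theorem_fundamental_properties_regular}. The natural choice is the straight-line homotopy
\[
H(x,\lambda) := (1-\lambda)g_0(x) + \lambda g_1(x), \qquad \alpha(\lambda) := (1-\lambda)z_0 + \lambda z_1.
\]
Both $H$ and $\alpha$ are smooth, and $H(\cdot,0)=g_0$, $H(\cdot,1)=g_1$, $\alpha(0)=z_0$, $\alpha(1)=z_1$. It therefore remains to check that $(H,U,\alpha)$ is an admissible homotopy, i.e.\ that $H$ is proper on $\cl U\times[0,1]$ and that $H(x,\lambda)\neq\alpha(\lambda)$ for all $(x,\lambda)\in\partial U\times[0,1]$.

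For the non-vanishing condition, I would estimate as follows. For every $\lambda\in[0,1]$ and every $x\in\cl U$, the hypotheses on $g_0,g_1$ give $|H(x,\lambda)-f(x)|\le(1-\lambda)\epsilon+\lambda\epsilon=\epsilon$, and analogously $|\alpha(\lambda)-y|\le\sigma$. Hence
\[
|H(x,\lambda)-\alpha(\lambda)|\ge |f(x)-y|-\epsilon-\sigma.
\]
For $x\in\partial U$ we have $|f(x)-y|\ge\dist(y,f(\partial U))>\epsilon+\sigma$, so the right-hand side is strictly positive, which gives the desired boundary condition.

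For the properness of $H$ on $\cl U\times[0,1]$, let $K\subseteq\R^k$ be compact. The set $H^{-1}(K)$ is closed in $\cl U\times[0,1]$, so it suffices to show it is bounded (and hence compact). If $(x,\lambda)\in H^{-1}(K)$, the same estimate $|H(x,\lambda)-f(x)|\le\epsilon$ yields $f(x)\in K+\overline{B(0,\epsilon)}$, a compact set. Since $f$ is proper on $\cl U$, the preimage $f^{-1}(K+\overline{B(0,\epsilon)})$ is compact, so the $x$-component stays in a compact set while $\lambda\in[0,1]$, proving boundedness.

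The two endpoint triples $(H(\cdot,0),U,\alpha(0))=(g_0,U,z_0)$ and $(H(\cdot,1),U,\alpha(1))=(g_1,U,z_1)$ are regular by hypothesis, so the Homotopy Invariance in Theorem \ref{theorem_fundamental_properties_regular} applies and yields $\deg(g_0,U,z_0)=\deg(g_1,U,z_1)$. The main (mild) obstacle is the verification that the linear homotopy does not meet the path $\alpha$ over $\partial U$; this is handled cleanly by the hypothesis $\epsilon+\sigma<\dist(y,f(\partial U))$ and the triangle inequality above, so no further subtlety is expected.
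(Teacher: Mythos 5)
Your proof is correct and follows essentially the same route as the paper: the straight-line homotopy $H(x,\lambda)=(1-\lambda)g_0(x)+\lambda g_1(x)$ with $\alpha(\lambda)=(1-\lambda)z_0+\lambda z_1$, the $\epsilon$- and $\sigma$-closeness estimates, the triangle inequality giving $|H(x,\lambda)-\alpha(\lambda)|\ge|f(x)-y|-\epsilon-\sigma>0$ on $\partial U\times[0,1]$, and then Homotopy Invariance for regular triples. The only superficial difference is that you verify properness of $H$ directly (pulling back a compact $K$ into $f^{-1}(K+\overline{B(0,\epsilon)})\times[0,1]$), whereas the paper cites Exercise \ref{exercise_proper_map}; both arguments are fine and rely on the same observation that $H$ is uniformly $\epsilon$-close to the proper map $f$.
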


\begin{proof}
Let $(g_0,U,z_0)$ and $(g_1,U,z_1)$ be as in the statement, and define the
smooth homotopy of triples $(H,U,\alpha)$ joining $(g_0,U,z_0)$ and
$(g_1,U,z_1)$ by
\[
H(x,\lambda) = (1-\lambda)g_0(x) +\lambda g_1(x), \quad
\alpha(\lambda) = (1-\lambda)z_0 +\lambda z_1.
\]
We have
\[
H(x,\lambda) - f(x) = (1-\lambda)(g_0(x) - f(x)) +\lambda(g_1(x) - f(x)).
\]
Thus,
\[
|H(x,\lambda) - f(x)| \le \epsilon,
\quad \forall (x,\lambda) \in \cl U \times [0,1],
\]
which implies that $H$ is proper on $\cl U$, on the basis of Exercise
\ref{exercise_proper_map}. Analogously,
\[
|\alpha(\lambda) - y| \le \sigma, \quad \forall \lambda \in [0,1].
\]
Let us show that
\[
H(x,\lambda) \neq \alpha(\lambda), \quad \forall (x,\lambda) \in \partial U
\times [0,1].
\]
In fact, given $(x,\lambda) \in \partial U \times [0,1]$, we have
\[
H(x,\lambda) - \alpha(\lambda) = H(x,\lambda) - f(x) + f(x) -
y + y - \alpha(\lambda)
\] 
and, consequently,
\[
|H(x,\lambda) - \alpha(\lambda)| \ge |f(x) - y| - \epsilon - \sigma > 0.
\]
The assertion now follows from the Homotopy Invariance Property for
regular triples (see Theorem \ref{theorem_fundamental_properties_regular}).
\end{proof}

The following important result is an extension, a consequence, and the
analogue of Theorem \ref{theorem_fundamental_properties_regular} for the
general case.

\begin{theorem}
\label{theorem_fundamental_properties_admissible}
The Brouwer degree in $\R^k$ satisfies the following three Fundamental
Properties:
\begin{itemize}
\item[] \emph{(Normalization)} $\deg(I,\R^k,0) = 1$, where $I$ denotes the
identity on $\R^k$;
\item[] \emph{(Additivity)} if $(f,U,y)$ is admissible, and $U_1$ and $U_2$
are two disjoint open subsets of $U$ such that $f^{-1}(y) \cap U \subseteq U_1
\cup U_2$, then
\[
\deg(f,U,y) = \deg(f,U_1,y) + \deg(f,U_2,y);
\]
\item[] \emph{(Homotopy Invariance)} if $(H,U,\alpha)$ is an admissible
homotopy, then
\[
\deg(H(\cdot,0),U,\alpha(0)) = \deg(H(\cdot,1),U,\alpha(1)).
\]
\end{itemize}
\end{theorem}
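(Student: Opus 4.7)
The plan handles the three properties in increasing order of difficulty. \emph{Normalization} is immediate: $(I,\R^k,0)$ is itself a regular triple with $I^{-1}(0)=\{0\}$ and $\i(I,0)=1$, so the claim follows from Definition \ref{definition_degree_regular}. For \emph{Additivity}, my strategy is to produce a single regular approximation of $(f,U,y)$ that also serves, in the sense of Definition \ref{definition_degree_admissible}, as a valid approximation for the triples $(f,U_i,y)$, $i=1,2$. Setting $K:=\cl{U}\setminus(U_1\cup U_2)$, the set $f(K)$ is closed by Exercise \ref{exercise_proper_is_closed}; a short argument using that each $U_i$ is open and disjoint from the other shows $\partial U,\partial U_1,\partial U_2\subseteq K$, and the hypothesis $f^{-1}(y)\cap U\subseteq U_1\cup U_2$ combined with admissibility forces $y\notin f(K)$, so $d:=\dist(y,f(K))>0$. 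Invoking the Smooth Approximation Theorem and Sard's Lemma, I pick a smooth $g$ that is $\epsilon$-close to $f$ on $\cl{U}$ and a regular value $z$ of $g$ that is $\sigma$-close to $y$, with $\epsilon+\sigma<d$. Then $(g,U,z)$ and each $(g,U_i,z)$ are regular triples computing the corresponding admissible degrees, and the bound forces $g^{-1}(z)\cap U\subseteq U_1\cup U_2$, so the sum-of-indices formula (\ref{formula_degree_regular}) splits at once.

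The heart of the argument is \emph{Homotopy Invariance}, and the first move is to use the just-proved Additivity to localize. The set $\Sigma:=\{(x,\lambda)\in\cl{U}\times[0,1]:H(x,\lambda)=\alpha(\lambda)\}$ is the zero set of the proper map $\Psi(x,\lambda):=H(x,\lambda)-\alpha(\lambda)$; admissibility confines $\Sigma$ to $U\times[0,1]$ and properness makes it compact. Choosing an open bounded $V$ with $\pi_1(\Sigma)\subseteq V\subseteq\cl{V}\subseteq U$, Additivity gives $\deg(H(\cdot,\lambda),U,\alpha(\lambda))=\deg(H(\cdot,\lambda),V,\alpha(\lambda))$ for every $\lambda\in[0,1]$, reducing the problem to an admissible homotopy over the compact product $\cl{V}\times[0,1]$.

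On this compact set put $\delta:=\min_{(x,\lambda)\in\partial V\times[0,1]}|H(x,\lambda)-\alpha(\lambda)|>0$. The Smooth Approximation Theorem applied to $H$ on $\cl{V}\times[0,1]$ and to $\alpha$ on $[0,1]$ yields smooth $\tilde H,\tilde\alpha$ within $\delta/6$ of $H,\alpha$. Sard's Lemma then provides regular values $z_0,z_1$ of $\tilde H(\cdot,0),\tilde H(\cdot,1)$ within $\delta/6$ of $\tilde\alpha(0),\tilde\alpha(1)$, and I replace $\tilde\alpha$ by the smooth path
\[
\hat\alpha(\lambda):=\tilde\alpha(\lambda)+(1-\lambda)\bigl(z_0-\tilde\alpha(0)\bigr)+\lambda\bigl(z_1-\tilde\alpha(1)\bigr),
\]
which stays within $\delta/6$ of $\tilde\alpha$ and satisfies $\hat\alpha(0)=z_0$, $\hat\alpha(1)=z_1$. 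A triangle-inequality check confirms that $(\tilde H,V,\hat\alpha)$ is a smooth admissible homotopy whose endpoint triples are \emph{regular}, so Theorem \ref{theorem_fundamental_properties_regular} equates their degrees, and Definition \ref{definition_degree_admissible} identifies each of these with $\deg(H(\cdot,\lambda),V,\alpha(\lambda))$, closing the chain.

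I expect the main obstacle to be in the homotopy invariance: the possible non-compactness of $\cl{U}$ blocks any naive uniform smooth approximation of $H$ on $\cl{U}\times[0,1]$, and it is the localization to a bounded $V$ via Additivity that defuses this and permits the smoothing plus Sard argument to produce a smooth admissible homotopy with regular endpoints, as needed to invoke Theorem \ref{theorem_fundamental_properties_regular}.
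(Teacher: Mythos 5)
Your proof is essentially correct and follows the same overall template as the paper: approximate the admissible data by regular/smooth data, control the errors against the distance to the ``forbidden'' set, and invoke Theorem~\ref{theorem_fundamental_properties_regular}.

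There is one genuine methodological difference, in Homotopy Invariance. You first localize to a bounded $V$ with $\pi_1(\Sigma)\subseteq V\subseteq\cl V\subseteq U$ (via Additivity, i.e.\ Excision) so that $\partial V\times[0,1]$ is compact and $\delta$ becomes a minimum. The paper instead works directly on $\cl U\times[0,1]$: since $(x,\lambda)\mapsto H(x,\lambda)-\alpha(\lambda)$ is proper, the image of $\partial U\times[0,1]$ is closed and misses $0$, so the infimum $\delta$ is nonzero (possibly $+\infty$ when $\partial U=\emptyset$), and a single global smooth approximation with constant tolerance $\epsilon+\sigma<\delta$ does the job. Your stated motivation for localizing --- that non-compactness of $\cl U$ ``blocks any naive uniform smooth approximation'' --- is actually unfounded in this paper's setting: the Smooth Approximation Theorem as stated allows an arbitrary continuous tolerance $\varepsilon(\cdot)$, in particular a constant one, on any closed set, and properness of the approximant then follows from Exercise~\ref{exercise_proper_map}. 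So your localization is a harmless extra step, not a necessary repair. On the other hand, your construction of $\hat\alpha$ by linearly correcting $\tilde\alpha$ to hit Sard-chosen regular values $z_0,z_1$ at the endpoints is a useful explicit detail that the paper glosses over when it asserts the existence of a smooth admissible homotopy ``joining two regular triples.'' Your Additivity argument (working with $K=\cl U\setminus(U_1\cup U_2)$, observing $\partial U,\partial U_1,\partial U_2\subseteq K$ and $y\notin f(K)$) is a more careful rendering of the paper's one-line use of $C=f(\cl U\setminus(U_1\cup U_2))$, and you correctly pick a regular value $z$ near $y$ where the paper tacitly does the same.
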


\begin{proof} Only the last two properties need to be proved.

\smallskip
(Additivity) Since $f$ if proper on $\cl U$, the subset
$C = f(\cl U \setminus (U_1 \cup U_2))$ of $\R^k$ is closed. Moreover,
the assumption $f^{-1}(y) \cap U \subseteq U_1 \cup U_2$ implies $\dist(y,C) >
0$. Let $g$ be any smooth map which is $\epsilon$\mbox{-}close to $f$, with
$\epsilon < \dist(y,C)$. It is easy to check that $g^{-1}(y) \cap U \subseteq
U_1 \cup U_2$. The assertion now follows from Definition
\ref{definition_degree_admissible} and the Additivity Property of the degree
for regular triples (stated in Theorem
\ref{theorem_fundamental_properties_regular}).

\smallskip
(Homotopy Invariance) Observe that, on the basis of Exercise
\ref{exercise_proper_map}, the map
\[
(x,\lambda) \mapsto H(x,\lambda) - \alpha(\lambda)
\]
is proper on $\cl U \times [0,1]$. Thus the image, under this map, of
the set $\partial U \times [0,1]$ is closed in $\R^k$. Consequently, since
this set does not contain the origin of $\R^k$, the extended real number
\[
\delta := \inf\Big\{ |H(x,\lambda) - \alpha(\lambda)| : (x,\lambda) \in
\partial U \times (0,1) \Big\}
\]
is nonzero.
Let $(G,U,\beta)$ be any smooth homotopy of triples, joining two regular
triples, and satisfying the following properties:
\begin{enumerate}
\item $G$ is $\epsilon$\mbox{-}close to $H$ (on $\cl U \times [0,1]$);
\item $\beta$ is $\sigma$\mbox{-}close to $\alpha$ (on $[0,1]$);
\item $\epsilon + \sigma < \delta$.
\end{enumerate}
The existence of such a triple is ensured by the Smooth Approximation Theorem
and Sard's Lemma. As in the proof of Corollary
\ref{corollary_locally_constant} one can show that $(G,U,\beta)$ is an
admissible homotopy. Therefore, because of the Homotopy Invariance Property
for regular triples, we get
\[
\deg(G(\cdot,0),U,\beta(0)) = \deg(G(\cdot,1),U,\beta(1)).
\]  
The assertion now follows from Definition
\ref{definition_degree_admissible}.
\end{proof}

\subsection{Direct consequences of the Fundamental Properties}
We will prove now some important additional properties of the Brouwer degree.
Even if they could be easily deduced from the definition of
degree, we prefer to prove them starting from the three \emph{Fundamental
Properties} stated in Theorem \ref{theorem_fundamental_properties_admissible}:
\emph{Normalization}, \emph{Additivity} and \emph{Homotopy Invariance}. The
advantage of this method will be evident in the next  subsection, which is
devoted to the axiomatic approach.

\smallskip
First of all we observe that, given a map $f: X \to \R^k$ defined on a subset
$X$ of $\R^k$ and given $y \in \R^k$, the triple $(f,\emptyset,y)$ is
admissible. Therefore, $\deg(f,\emptyset,y)$ is defined. We claim that 
this degree is zero.

Indeed, from the Additivity Property, putting $U = \emptyset$, $U_1 =
\emptyset$ and $U_2 = \emptyset$, we get
\[
\deg(f,\emptyset,y) = \deg(f,\emptyset,y) + \deg(f,\emptyset,y),
\]
which implies our assertion.

\medskip
The following property, which is evident in the regular case, shows that the
degree of an admissible triple $(f,U,y)$ depends only on the behavior of $f$
in any neighborhood of the set of solutions of the equation $f(x) = y$, $x
\in U$.

\begin{theorem}[Excision Property]
\label{theorem_excision}
If $(f,U,y)$ is admissible and $V$ is an open subset of $U$ such that
$f^{-1}(y) \cap U \subseteq V$, then $(f,V,y)$ is admissible and
\[
\deg(f,U,y) = \deg(f,V,y).
\]
\end{theorem}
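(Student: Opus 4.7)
The plan is to reduce the Excision Property to a direct application of the Additivity Property, after first verifying that $(f,V,y)$ is indeed admissible. The trick is to take the two open sets in Additivity to be $U_1 = V$ and $U_2 = \emptyset$, and to use the already-established fact that the degree on the empty set is zero.

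First I would verify admissibility of $(f,V,y)$. Since $V\subseteq U$, we have $\cl V\subseteq \cl U$, so $\cl V$ is closed in $\cl U$. For any compact $K\subseteq\R^k$, the set $f^{-1}(K)\cap\cl V$ is a closed subset of the compact set $f^{-1}(K)\cap\cl U$, hence compact; this shows $f$ is proper on $\cl V$. Next I would check $y\notin f(\partial V)$. Suppose $x\in\partial V$ with $f(x)=y$. Since $\partial V\subseteq\cl V\subseteq\cl U$, either $x\in U$ or $x\in\partial U$. The case $x\in\partial U$ is impossible because $(f,U,y)$ is admissible. The case $x\in U$ is also impossible: the hypothesis gives $x\in f^{-1}(y)\cap U\subseteq V$, but $x\in\partial V$ means $x\notin V$ (as $V$ is open). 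This contradiction establishes admissibility of $(f,V,y)$.

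Next I would invoke the Additivity Property of Theorem \ref{theorem_fundamental_properties_admissible} with $U_1=V$ and $U_2=\emptyset$. These are disjoint open subsets of $U$, and the assumption $f^{-1}(y)\cap U\subseteq V=U_1\cup U_2$ holds by hypothesis. Therefore
\[
\deg(f,U,y) = \deg(f,V,y) + \deg(f,\emptyset,y).
\]
Using the identity $\deg(f,\emptyset,y)=0$ established just before the theorem (obtained by applying Additivity with $U=U_1=U_2=\emptyset$), the second summand vanishes and the conclusion follows.

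I do not anticipate a real obstacle here: the argument is essentially a two-line corollary of Additivity. The only point requiring care is the admissibility check at the boundary of $V$, which must be split into the cases $x\in U$ and $x\in\partial U$ to exploit both the hypothesis $f^{-1}(y)\cap U\subseteq V$ and the admissibility of $(f,U,y)$.
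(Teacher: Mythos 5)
Your proof is correct and follows exactly the paper's approach: apply the Additivity Property with $U_1 = V$ and $U_2 = \emptyset$, together with $\deg(f,\emptyset,y)=0$. The paper simply declares the admissibility of $(f,V,y)$ ``clear,'' whereas you spell out the details, which is a reasonable addition.
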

\begin{proof}
The admissibility of $(f,V,y)$ is clear. To show the equality apply the
Additivity Property with $U_1 = V$ and $U_2 = \emptyset$.
\end{proof}

As the above property, also the following one is evident in the regular case. 

\begin{theorem}[Existence Property]
\label{theorem_existence}
If $\deg(f,U,y) \neq 0$, then the equation $f(x)
= y$ admits at least one solution in $U$.
\end{theorem}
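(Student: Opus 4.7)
The plan is to prove the contrapositive: assuming the equation $f(x) = y$ has no solution in $U$, I will show $\deg(f,U,y) = 0$. This avoids reasoning directly from Definition \ref{definition_degree_admissible} and instead leverages only the Fundamental Properties of Theorem \ref{theorem_fundamental_properties_admissible} and the simple consequences already derived in the text, in line with the subsection's stated philosophy.

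The key observation is that if $f^{-1}(y) \cap U = \emptyset$, then the inclusion $f^{-1}(y) \cap U \subseteq V$ holds trivially with $V = \emptyset$. The empty set is an open subset of $U$, so the Excision Property (Theorem \ref{theorem_excision}) applies and gives $\deg(f,U,y) = \deg(f,\emptyset,y)$.

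The proof is then completed by invoking the fact, established just before Theorem \ref{theorem_excision}, that $\deg(f,\emptyset,y) = 0$ for any admissible triple with empty domain (a consequence of Additivity with $U_1 = U_2 = \emptyset$). Combining the two equalities yields $\deg(f,U,y) = 0$, contradicting the hypothesis. There is no real obstacle here: once the Excision Property is in hand, the Existence Property follows immediately by feeding it $V = \emptyset$, so the whole argument is essentially a one-line application of previously established results.
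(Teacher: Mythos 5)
Your proof is correct and essentially identical to the paper's: both argue by contradiction (equivalently, contrapositive), invoke the Excision Property with $V=\emptyset$, and conclude via the previously established fact that $\deg(f,\emptyset,y)=0$. No differences worth noting.
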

\begin{proof}
Assume that $f^{-1}(y) \cap U$ is empty. By the Excision
Property, taking $V = \emptyset$, we get
\[
\deg(f,U,y) = \deg(f,\emptyset,y) = 0,
\]
which is a contradiction.
\end{proof}

Given an admissible triple $(f,U,y)$, since the target space of $f$ is $\R^k$,
the equation $f(x) = y$ is equivalent to $f(x) - y = 0$. In terms of degree
this fact is expressed by the following property, which is evident in the
regular case.

\begin{theorem}[Translation Invariance Property]
\label{theorem_translation_invariance}
If $(f,U,y)$ is admissible, then so is $(f-y,U,0)$, and
\[
\deg(f,U,y) = \deg(f-y,U,0).
\]
\end{theorem}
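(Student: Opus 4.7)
The plan is to derive the Translation Invariance directly from the Homotopy Invariance Property for admissible triples (Theorem \ref{theorem_fundamental_properties_admissible}), having first checked that admissibility of $(f-y,U,0)$ is inherited from that of $(f,U,y)$.

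First I would establish that $(f-y,U,0)$ is admissible. Properness of $f-y$ on $\cl{U}$ is immediate since $f-y$ differs from $f$ by a constant (or via Exercise \ref{exercise_proper_map}, as $|f(x)-y|\to\infty$ whenever $|f(x)|\to\infty$). The condition $0\notin (f-y)(\partial U)$ is just a rewriting of $y\notin f(\partial U)$, which holds by admissibility of $(f,U,y)$.

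Next I would set up the obvious linear homotopy that interpolates between the two triples. Define
\[
H(x,\lambda) = f(x) - \lambda y, \qquad \alpha(\lambda) = (1-\lambda)y,
\]
so that $(H(\cdot,0),U,\alpha(0))=(f,U,y)$ and $(H(\cdot,1),U,\alpha(1))=(f-y,U,0)$. The key observation is the identity
\[
H(x,\lambda) - \alpha(\lambda) = f(x) - y,
\]
which is independent of $\lambda$. Hence $H(x,\lambda)\neq\alpha(\lambda)$ on $\partial U\times[0,1]$ is equivalent to $f(x)\neq y$ on $\partial U$, which holds by the admissibility of $(f,U,y)$. Properness of $H$ on $\cl{U}\times[0,1]$ follows from the properness of $f$ on $\cl{U}$: if $(x_n,\lambda_n)$ is such that $H(x_n,\lambda_n)$ stays in a compact set, then $f(x_n)=H(x_n,\lambda_n)+\lambda_n y$ does as well (as $\lambda_n\in[0,1]$), so $(x_n)$ admits a convergent subsequence in $\cl{U}$, and then so does $(x_n,\lambda_n)$ after passing to a further subsequence in $[0,1]$.

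Therefore $(H,U,\alpha)$ is an admissible homotopy joining $(f,U,y)$ with $(f-y,U,0)$, and the Homotopy Invariance Property in Theorem \ref{theorem_fundamental_properties_admissible} gives the desired equality $\deg(f,U,y)=\deg(f-y,U,0)$. There is no real obstacle in this argument; the only point that requires a moment's thought is choosing the homotopy so that the ``moving target'' $\alpha(\lambda)$ cancels against the ``moving map'' $H(\cdot,\lambda)$, reducing admissibility of the homotopy to the admissibility of the original triple.
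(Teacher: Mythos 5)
Your proposal is correct and uses exactly the same homotopy as the paper, namely $H(x,\lambda)=f(x)-\lambda y$ with $\alpha(\lambda)=(1-\lambda)y$, followed by an appeal to the Homotopy Invariance Property of Theorem \ref{theorem_fundamental_properties_admissible}. The only difference is that you spell out the routine admissibility and properness verifications that the paper leaves to the reader.
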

\begin{proof}
Consider the family of equations
\[
f(x) - \lambda y = (1 - \lambda) y, \quad \lambda \in [0,1],
\]
and apply the Homotopy Invariance Property with $H(x,\lambda) = f(x) -
\lambda y$ and $\alpha(\lambda) = (1 - \lambda) y$.
\end{proof}

The next result is a straightforward consequence of the Homotopy
Invariance Property, and the proof is left to the reader.

\begin{theorem}[Continuous Dependence Property]
\label{theorem_continuous_dependence}
Let $f$ be proper on the closure of an open subset $U$ of $\R^k$. Then the
map $y \mapsto \deg(f,U,y)$, which is defined on the open set $\R^k \setminus
f(\partial U)$, is locally constant. Thus, $\deg(f,U,y)$ depends only on the
connected component of  $\R^k \setminus f(\partial U)$ containing $y$.
\end{theorem}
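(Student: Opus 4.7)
The plan is to reduce everything to the Homotopy Invariance Property of Theorem~\ref{theorem_fundamental_properties_admissible}, using a constant-in-$x$ homotopy whose time parameter acts only on the target point. First I would observe that $f(\partial U)$ is closed in $\R^k$ (by Exercise~\ref{exercise_proper_is_closed}, applied to the proper map $f$ restricted to the closed set $\partial U$), so the complement $\R^k \setminus f(\partial U)$ is indeed open. To show local constancy, fix $y_0 \in \R^k \setminus f(\partial U)$, and choose $r > 0$ so that the open ball $B(y_0,r)$ is disjoint from $f(\partial U)$.

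For any $y_1 \in B(y_0,r)$, I would consider the homotopy of triples $(H,U,\alpha)$ defined by
\[
H(x,\lambda) = f(x), \qquad \alpha(\lambda) = (1-\lambda)y_0 + \lambda y_1.
\]
The map $H$ is proper on $\cl{U} \times [0,1]$, since $[0,1]$ is compact and $f$ is proper on $\cl{U}$: the preimage of a compact $K \subseteq \R^k$ under $H$ is $f^{-1}(K) \times [0,1]$. The path $\alpha$ is the straight segment from $y_0$ to $y_1$, which by convexity stays inside $B(y_0,r)$; hence $\alpha(\lambda) \notin f(\partial U)$ for every $\lambda$, i.e.\ $H(x,\lambda) \neq \alpha(\lambda)$ for every $(x,\lambda) \in \partial U \times [0,1]$. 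Thus $(H,U,\alpha)$ is admissible, and the Homotopy Invariance Property yields
\[
\deg(f,U,y_0) = \deg(H(\cdot,0),U,\alpha(0)) = \deg(H(\cdot,1),U,\alpha(1)) = \deg(f,U,y_1).
\]
This shows $y \mapsto \deg(f,U,y)$ is constant on $B(y_0,r)$, hence locally constant on $\R^k \setminus f(\partial U)$.

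The second conclusion then follows from a standard topological fact: a locally constant integer-valued function on an open subset of $\R^k$ is constant on each connected component. Indeed, for any fixed value $n \in \Z$, the preimage $\{y : \deg(f,U,y) = n\}$ is both open (by local constancy) and closed in $\R^k \setminus f(\partial U)$ (its complement is the union of the other level sets, all open), so it is a union of connected components. There is no real obstacle here; the only point requiring care is checking properness of $H$ and the fact that the straight-line path from $y_0$ to $y_1$ never touches $f(\partial U)$, both of which are immediate from the choice of $r$ and the compactness of $[0,1]$.
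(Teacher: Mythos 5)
Your proof is correct and takes exactly the route the paper intends: the paper omits the proof, describing the result as a ``straightforward consequence of the Homotopy Invariance Property,'' and your constant homotopy $H(x,\lambda)=f(x)$ paired with the straight-line path $\alpha(\lambda)=(1-\lambda)y_0+\lambda y_1$ is the standard way to realize that. The properness check via $H^{-1}(K)=f^{-1}(K)\times[0,1]$ and the observation that the segment stays in $B(y_0,r)\subseteq\R^k\setminus f(\partial U)$ are both correct, as is the passage from local constancy to constancy on connected components.
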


Because of the above property, given an open $U \subseteq \R^k$, $f$
proper on $\cl U$ and a connected subset $V$ of $\R^k \setminus
f(\partial U)$, we will use the notation $\deg(f,U,V)$ to indicate the degree
of $f$ in $U$ at any $y \in V$. In particular, if $U = \R^k$, the integer
$\deg(f,\R^k,\R^k)$ will be denoted by $\deg(f)$.

\medskip
The following property means that, given $y \in \R^k$ and $U \subseteq \R^k$
open and \emph{bounded}, the degree of a map $f: \cl{U} \to \R^k$
(in $U$ at $y$) depends only on the restriction of $f$ to the boundary of $U$
(assuming the condition $y \notin f(\partial U)$, which, $U$ being bounded,
is sufficient for the degree to be defined). This is important since, in many
cases, it allows us to deduce the existence of solutions in $U$ of the
equation $f(x) = y$ only from the inspection of the behavior of $f$ along the
boundary of $U$; as in the case of $U = (a,b) \subseteq \R$, where the
condition $f(a)f(b) < 0$ implies $f(x) = 0$ has a solution in $(a,b)$.

\begin{theorem}[Boundary Dependence Property]
\label{theorem_boundary_dependence}
Let $U \subseteq \R^k$ be open and bounded, and let $f,g: \cl U \to
\R^k$ be such that $f(x) = g(x)$ for all $x \in \partial U$. Then, given $y
\in \R^k \setminus f(\partial U)$, one has
\[
\deg(f,U,y) = \deg(g,U,y).
\]
\end{theorem}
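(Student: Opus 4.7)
The plan is to apply the Homotopy Invariance Property (Theorem \ref{theorem_fundamental_properties_admissible}) to the straight-line homotopy between $f$ and $g$, with the constant path at $y$.

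First I would define
\[
H(x,\lambda) = (1-\lambda)f(x) + \lambda g(x), \qquad \alpha(\lambda) = y,
\]
on $\cl{U} \times [0,1]$ and $[0,1]$ respectively. Clearly $H(\cdot,0) = f$ and $H(\cdot,1) = g$, so if $(H,U,\alpha)$ turns out to be an admissible homotopy, the Homotopy Invariance Property immediately gives the desired equality $\deg(f,U,y) = \deg(g,U,y)$.

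Next I would verify the two requirements for admissibility in Definition \ref{definition_admissible_homotopy}. Properness of $H$ on $\cl{U}\times[0,1]$ is the easy part: since $U$ is bounded, $\cl{U}$ is compact, hence $\cl{U}\times[0,1]$ is compact, and any continuous map from a compact metric space into $\R^k$ is automatically proper. For the boundary condition, I need $H(x,\lambda) \neq \alpha(\lambda) = y$ for every $(x,\lambda) \in \partial U \times [0,1]$. This is where the hypothesis $f=g$ on $\partial U$ is essential: for such $x$ we have $f(x)=g(x)$, and therefore
\[
H(x,\lambda) = (1-\lambda)f(x) + \lambda g(x) = f(x),
\]
which is different from $y$ because $y \notin f(\partial U)$ by assumption.

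There is essentially no obstacle here; the only subtle point is recognizing that the convex combination on the boundary collapses to $f(x)$ regardless of $\lambda$, so one does not need any extra hypothesis beyond $y\notin f(\partial U)$. With the two conditions verified, $(H,U,\alpha)$ is an admissible homotopy joining $(f,U,y)$ and $(g,U,y)$, and Homotopy Invariance yields the conclusion.
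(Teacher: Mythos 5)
Your proof is correct and is essentially the same as the paper's: both use the straight-line homotopy between $f$ and $g$ with the constant path $\alpha(\lambda)=y$ and invoke the Homotopy Invariance Property; you simply spell out the admissibility check (properness via compactness of $\cl U$, and the boundary condition via $f=g$ on $\partial U$) that the paper leaves to the reader.
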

\begin{proof}
The assertion follows from the fact that the homotopy of triples
$(H,U,\alpha)$ defined by 
\[
H(x,\lambda) = \lambda f(x) + (1 - \lambda) g(x), \quad \alpha(\lambda) = y
\]
is admissible.
\end{proof}

We point out that, in the above result, the assumption that $U$ is bounded
cannot be dropped. To see this, take $U = (0,+\infty)$, $f(x) = x$, $g(x) =
-x$, $y=1$.

\subsection{The axiomatic approach}
{}From an axiomatic point of view, the topological degree (in Euclidean
spaces) is a map which to any admissible triple $(f,U,y)$ assigns an integer,
$\deg(f,U,y)$, satisfying the three \emph{Fundamental Properties} (stated in
Theorem \ref{theorem_fundamental_properties_admissible}):
\emph{Normalization}, \emph{Additivity} and \emph{Homotopy Invariance}.

A famous result by Amann-Weiss \cite{AmWe} (1973) asserts \emph{the uniqueness 
of the topological degree}. That is, there exists at most one integer-valued 
map (defined on the class of the admissible triples) which verifies the three
Fundamental Properties. 

There are several methods for the construction of degree (see, for
example, \cite{Dei,Dol,GuPo,Llo,Mil,Nag,Nir,Sch,Zei}), however, because of 
the Amann-Weiss result, with any of such
methods, what is important is to prove the three Fundamental Properties
(called, in this subsection, \emph{Amann-Weiss axioms}): all the other
classical properties will follow, as we have already shown in the previous
subsection.

\medskip
Let us show that from the three Amann-Weiss axioms one obtains an explicit
formula for computing the degree of triples which are, in a sense to be made
precise, dense in the family of the admissible triples. 
In particular, we will show that when an admissible triple $(f,U,y)$ is
actually regular, then
\[
\deg(f,U,y)\; = \sum_{x \in f^{-1}(y) \cap U} \i(f,x).
\]
The uniqueness of the degree will follow easily from the above formula and
the Homotopy Invariance Property (see Theorem \ref{theorem_degree_uniqueness}
below).

Recall first that, given a proper map $f: \R^k \to \R^k$, $\deg(f)$ stands for
$\deg(f,\R^k,y)$, where $y$ is any value in $\R^k$. This notation is
justified by the Continuous Dependence Property, which, as all the other
properties in the previous subsection, is a consequence of the axioms.
Observe that, because of the Existence Property, if $\deg(f) \neq 0$,
then $f$ is surjective. We point out also that, as a consequence of the
Homotopy Invariance axiom, if a homotopy $H:
\R^k
\times [0,1] \to
\R^k$ is proper, then
$\deg(H(\cdot,
\lambda))$ is well defined and independent of $\lambda$.

Let, as usual, $\L(\R^k)$ denote the normed space of linear endomorphisms of
$\R^k$ and let $\GL(\R^k)$ stand for the open subset of $\L(\R^k)$ of the
automorphisms; that is,
\[
\GL(\R^k) = \Big\{ L \in \L(\R^k): \det(L) \neq 0 \Big\}.
\]
Now, let $L \in \GL(\R^k)$ be given. Since $L$ is invertible, it is a proper
map of $\R^k$ onto itself (notice that any homeomorphism is a proper map).
Thus, $\deg(L)$ is well defined.

Let us show that the Amann-Weiss axioms imply
\begin{equation}
\label{formula_degree_endomorphism}
\deg(L) = \sign(\det(L)), \quad \forall L \in \GL(\R^k).
\end{equation}

To this end, we recall that the open subset $\GL(\R^k)$ of $\L(\R^k)$ has
exactly two connected components. Namely,
\[
\GL_+(\R^k) = \Big\{ L \in \L(\R^k): \det(L) > 0 \Big\}.
\]
and
\[
\GL_-(\R^k) = \Big\{ L \in \L(\R^k): \det(L) < 0 \Big\}.
\]
As a consequence of the Homotopy Invariance axiom it is easy to check that
the map which assigns $\deg(L)$ to any $L \in \GL(\R^k)$ is locally constant.
Indeed, if $L_0$ and $L_1$ are close one to the other, the homotopy
$H(x,\lambda) = L_0 x + \lambda(L_1 - L_0)$ is proper. Consequently,
$\deg(L)$ depends only on the component of $\GL(\R^k)$ containing $L$.

Since the identity $I$ of $\R^k$ belongs to $\GL_+(\R^k)$, the Normalization
axiom implies $\deg(L) = 1$, $\forall L \in \GL_+(\R^k)$. 

Let us show that $\deg(L) = -1$, $\forall L \in \GL_-(\R^k)$. For this purpose
consider the map $f: \R^k \to \R^k$ given by
\[
f(\xi_1,\ldots,\xi_{k-1},\xi_k) = (\xi_1,\ldots,\xi_{k-1},|\xi_k|).
\]
This map is proper, since $\|f(x)\| = \|x\|$, $\forall x \in \R^k$. Thus
$\deg(f)$ makes sense and is zero, because $f$ is not surjective.

Let $V_-$ and $V_+$ denote, respectively, the open half-spaces of the points
in $\R^k$ with negative and positive last coordinate. Consider the two
solutions
\[
x_- = (0,\ldots,0,-1) \quad \mbox{and} \quad x_+ = (0,\ldots,0,1)
\]
of the equation $f(x) = y$, with $y = (0,\ldots,0,1)$, and observe that $x_-
\in V_-$, $x_+ \in V_+$.

By the Additivity axiom we get
\[
0 = \deg(f) = \deg(f,V_-,y) + \deg(f,V_+,y).
\]
Now, observe that in $V_+$ the map $f$ coincides with the identity $I$ of
$\R^k$. Therefore, because of the Excision Property, one has
\[
\deg(f,V_+,y) = \deg(I) = 1,
\]
which implies $\deg(f,V_-,y) = -1$.

Since $f$ in $V_-$ coincides with the linear map $L_- \in
\GL_-(\R^k)$ given by
\[
(\xi_1,\ldots,\xi_{k-1},\xi_k) \mapsto (\xi_1,\ldots,\xi_{k-1},-\xi_k),
\]
we obtain $\deg(L_-) = -1$. Thus, $\GL_-(\R^k)$ being connected, we finally
get $\deg(L) = -1$ for all $L \in \GL_-(\R^k)$, as claimed.

\medskip
Let us show how from the Amann-Weiss axioms one can deduce the formula
\eqref{formula_degree_regular} for computing the degree of a regular triple.
More generally, we prove the following result.

\begin{theorem}[Computation Formula]
\label{theorem_computation_formula}
Let $(f,U,y)$ be an admissible triple. Assume that, at any $x \in
f^{-1}(y) \cap U$, $f$ is Fr\'echet differentiable with nonsingular
derivative. Then $f^{-1}(y) \cap U$ is finite and 
\begin{equation*}
\label{formula_degree_general}
\deg(f,U,y)\; = \sum_{x \in f^{-1}(y) \cap U} \i(f,x).
\end{equation*}
\end{theorem}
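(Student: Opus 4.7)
The plan is to reduce, step by step, the computation of $\deg(f,U,y)$ to the degrees of invertible linear maps, for which the formula $\deg(L)=\sign(\det L)$ has already been established from the Amann--Weiss axioms.

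First I would verify that $S:=f^{-1}(y)\cap U$ is finite. Since $f$ is proper on $\cl U$ and $y\notin f(\partial U)$, the set $f^{-1}(y)\cap\cl U$ is compact and contained in $U$, hence $S$ is compact. It remains to show $S$ is discrete: if $x\in S$ and $L=f'(x)$ is invertible, pick $c>0$ with $|Lv|\ge c|v|$ for all $v\in\R^k$; by Fr\'echet differentiability, in a neighborhood of $x$ one has $|f(z)-y-L(z-x)|\le (c/2)|z-x|$, so $|f(z)-y|\ge (c/2)|z-x|$ and thus $f(z)\ne y$ for $z$ near $x$, $z\ne x$. Compact plus discrete gives finite, so $S=\{x_1,\ldots,x_n\}$.

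Next I would use Excision and Additivity to localize: pick pairwise disjoint open balls $B_i$ centered at $x_i$ with $\cl{B_i}\subseteq U$, chosen small enough that the inequality above holds on $\cl{B_i}$ with $L_i=f'(x_i)$. Since $S\subseteq\bigcup_i B_i\subseteq U$, the Excision Property together with the Additivity axiom gives
\[
\deg(f,U,y)=\sum_{i=1}^{n}\deg(f,B_i,y).
\]
So the theorem reduces to proving $\deg(f,B_i,y)=\i(f,x_i)=\sign(\det L_i)$ for each $i$.

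For a fixed $i$, the main step is the homotopy from $f$ to its affine approximation $A_i(x):=y+L_i(x-x_i)$ inside $B_i$, via $H(x,\lambda)=(1-\lambda)f(x)+\lambda A_i(x)$ with $\alpha\equiv y$. For $x\in\partial B_i$, writing
\[
H(x,\lambda)-y=L_i(x-x_i)+(1-\lambda)\bigl(f(x)-y-L_i(x-x_i)\bigr),
\]
the choice of $B_i$ yields $|H(x,\lambda)-y|\ge c|x-x_i|-(c/2)|x-x_i|>0$, so $(H,B_i,\alpha)$ is admissible (properness on $\cl{B_i}\times[0,1]$ is automatic because $\cl{B_i}$ is compact). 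By Homotopy Invariance, $\deg(f,B_i,y)=\deg(A_i,B_i,y)$. Translating by Theorem~\ref{theorem_translation_invariance} and using Excision to enlarge the domain to $\R^k$ (the only solution of $L_i(x-x_i)=0$ is $x_i\in B_i$), we obtain $\deg(A_i,B_i,y)=\deg(L_i,\R^k,0)=\deg(L_i)=\sign(\det L_i)=\i(f,x_i)$ by formula~\eqref{formula_degree_endomorphism}. Summing over $i$ finishes the proof.

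The main obstacle is the homotopy admissibility check in the last paragraph: everything hinges on exploiting the lower bound $|L_i v|\ge c|v|$ against the Fr\'echet remainder, which is why $B_i$ must be chosen \emph{after} fixing the constant $c$ associated with each $L_i$. Once that local estimate is in place, the rest is a clean application of the axioms and the already-derived consequences (Excision, Translation Invariance, Homotopy Invariance) together with the known degree of invertible linear maps.
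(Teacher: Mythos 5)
Your proof is correct and follows essentially the same route as the paper's: the paper also establishes finiteness of $f^{-1}(y)\cap U$ by compactness plus the local injectivity estimate, localizes via Additivity to disjoint neighborhoods of the solutions, and proves $\deg(f,V_i,y)=\deg(f'(x_i))$ by the admissible linear homotopy $H(x,\lambda)$ between $f$ and its affine approximation, with the admissibility check resting on exactly the same bound $|L_iv|\ge c|v|$ dominating the Fr\'echet remainder. The only cosmetic difference is that the paper isolates the single-solution reduction as a separate lemma (Lemma~\ref{lemma_degree_linearization}), whereas you carry it out inline.
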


In order to prove Theorem \ref{theorem_computation_formula} we need the following
result.

\begin{lemma}
\label{lemma_degree_linearization}
Let $(f,V,y_0)$ be an admissible triple. Assume that the equation $f(x) = y_0$
has a unique solution $x_0 \in V$. If $f$ is Fr\'echet differentiable at $x_0$
and $f'(x_0)$ is invertible, then $\deg(f,V,y) = \deg(f'(x_0))$.
\end{lemma}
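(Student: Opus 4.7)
The plan is to prove Lemma \ref{lemma_degree_linearization} by a three-stage reduction: localize via Excision, straighten $f$ to its linearization via a Homotopy Invariance argument, and then reduce the linearized problem to the pure linear map $f'(x_0)$ via Translation Invariance plus one more admissible homotopy.

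First, since $x_0$ is the only solution of $f(x) = y_0$ in $V$, the Excision Property (Theorem \ref{theorem_excision}) lets me replace $V$ by any open neighborhood of $x_0$ contained in $V$; in particular, by a small open ball $B = B_\rho(x_0)$ whose closure sits inside $V$. So it suffices to prove $\deg(f,B,y_0) = \deg(f'(x_0))$.

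Second, write $L_0 = f'(x_0)$ and define the affine linearization $L(x) = y_0 + L_0(x - x_0)$. The key homotopy is the straight-line homotopy
\[
H(x,\lambda) = (1-\lambda)f(x) + \lambda L(x), \qquad \alpha(\lambda) \equiv y_0,
\]
with $\lambda \in [0,1]$. I must verify this is an admissible homotopy on $\cl B$ for $\rho$ small enough, i.e.\ that $H(x,\lambda) \neq y_0$ whenever $|x - x_0| = \rho$. Using Fréchet differentiability, $f(x) - L(x) = o(|x-x_0|)$; and since $L_0$ is invertible, there is $c > 0$ with $|L_0 h| \geq c|h|$ for all $h \in \R^k$. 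Hence on $\partial B$,
\[
|H(x,\lambda) - y_0| \geq |L_0(x-x_0)| - (1-\lambda)|f(x) - L(x)| \geq c\rho - o(\rho),
\]
which is strictly positive once $\rho$ is chosen small enough. Properness of $H$ on $\cl B \times [0,1]$ is automatic because $\cl B$ is compact. Homotopy Invariance (Theorem \ref{theorem_fundamental_properties_admissible}) therefore gives $\deg(f,B,y_0) = \deg(L,B,y_0)$. This admissibility check is where I expect the main technical work; everything else is bookkeeping.

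Third, I compute $\deg(L,B,y_0)$. Since $L$ is a proper homeomorphism with $L^{-1}(y_0) = \{x_0\} \subseteq B$, Excision enlarges $B$ back to $\R^k$, giving $\deg(L,B,y_0) = \deg(L,\R^k,y_0)$. Translation Invariance (Theorem \ref{theorem_translation_invariance}) yields
\[
\deg(L,\R^k,y_0) = \deg(L - y_0,\R^k,0) = \deg\bigl(L_0(\cdot - x_0),\R^k,0\bigr).
\]
Finally, the homotopy $G(x,\lambda) = L_0\bigl(x - (1-\lambda)x_0\bigr)$ with constant path $\beta \equiv 0$ is admissible on $\R^k \times [0,1]$ (proper because $|G(x,\lambda)| \geq c\,|x| - c|x_0|$ and the boundary condition is vacuous, since $\partial \R^k = \emptyset$). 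Homotopy Invariance then gives $\deg(L_0(\cdot - x_0),\R^k,0) = \deg(L_0,\R^k,0) = \deg(f'(x_0))$, completing the chain of equalities.
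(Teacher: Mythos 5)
Your proof is correct and takes essentially the same route as the paper: excise down to a small ball around $x_0$, deform $f$ to its affine linearization $L(x)=y_0+f'(x_0)(x-x_0)$ via the straight-line homotopy with constant path $y_0$, and justify admissibility by the estimate $|H(x,\lambda)-y_0|\ge |f'(x_0)(x-x_0)| - |f(x)-L(x)| \ge c\rho - o(\rho) > 0$ on a small sphere. In fact, your homotopy $(1-\lambda)f + \lambda L$ is precisely the paper's $H(x,\lambda)=y_0+f'(x_0)(x-x_0)+\lambda\|x-x_0\|\epsilon(x-x_0)$ written in the other direction. The only cosmetic difference is in the last step: the paper dismisses $\deg(L)=\deg(f'(x_0))$ with a one-line appeal to Translation Invariance, whereas you spell it out with the extra homotopy $G(x,\lambda)=f'(x_0)(x-(1-\lambda)x_0)$; both are fine, and your version is a bit more explicit.
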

 
\begin{proof}
Since $f$ is differentiable at $x_0$, we have
\[
f(x) = y_0 + f'(x_0)(x-x_0) + \|x-x_0\|\epsilon(x-x_0),\quad \forall x \in
\cl V,
\]
where $\epsilon(h)$ is defined for $h \in - x_0 + \cl V$, is continuous,
and such that $\epsilon(0) = 0$.

Observe that the linearized map of $f$ at $x_0$,
$g(x) := y_0 + f'(x_0)(x-x_0)$, is an affine map with linear part
$f'(x_0) \in \GL(\R^k)$. Thus, $g$ is proper and, because of the
Translation Invariance Property, one has $\deg(g) = \deg(f'(x_0))$.
Therefore, by the Excision Property, it is enough to show that
\begin{equation}
\label{equation_degree_linearization}
\deg(f,W,y_0) = \deg(g,W,y_0),
\end{equation}
where $W$ is a sufficiently small open neighborhood of
$x_0$ contained in $V$.

For this purpose, define the homotopy $H: \cl V \times [0,1] \to \R^k$
joining
$g$ with
$f$ by 
\[
H(x,\lambda) = y_0 + f'(x_0)(x-x_0) + \lambda\|x-x_0\|\epsilon(x-x_0).
\]
We have
\[
\|H(x,\lambda) - y_0\| \ge \left(m - \|\epsilon(x - x_0)\|\right)\|x - x_0\|,
\]
where $m = \inf\{ \|f'(x_0)v\|: \|v\| = 1\}$ is positive, $f'(x_0)$ being
invertible. This shows that, in a convenient neighborhood $W$ of $x_0$, the
homotopy of triples $(H,W,y_0)$ is admissible, and the equality
\ref{equation_degree_linearization} is established.
\end{proof}

\begin{proof}[Proof of Theorem \ref{theorem_computation_formula}]
Since $f$ is proper on $\cl U$, the set $f^{-1}(y)
\cap \cl U$ is compact, and the condition $y \notin f(\partial
U)$ ensures that it is contained in $U$. On the other hand, as in the proof
of Lemma \ref{lemma_degree_linearization}, the assumption that at any $x \in
f^{-1}(y) \cap \cl U$ the derivative $f'(x)$ is injective ensures that
this set is made up of isolated points. Therefore, it is actually a finite
set. Let $V_1, V_2, \ldots, V_n$ be pairwise disjoint open subsets of $U$,
each of them containing exactly one point of $f^{-1}(y)$. The Additivity
axiom implies
\[
\deg(f,U,y) = \sum_{i=1}^n \deg(f,V_i,y),
\]
and the assertion follows from Lemma \ref{lemma_degree_linearization} and
formula \eqref{formula_degree_endomorphism} for computing the degree of a
linear automorphism.
\end{proof}

The following result shows, in particular, that the degree of an admissible
triple coincides with the degree of any sufficiently close regular
triple, and this implies the uniqueness of the degree.

\begin{theorem}
\label{theorem_degree_uniqueness}
Let $(f,U,y)$ be an admissible triple. If $(g,U,z)$ is a regular
triple which can be joined to $(f,U,y)$ via an admissible homotopy, then
\begin{equation}
\label{equation_degree_uniqueness}
\deg(f,U,y)\; =  \sum_{x \in g^{-1}(z) \cap U} \i(g,x).
\end{equation}
In particular, this is true for any regular triple $(g,U,z)$ such that:
\begin{enumerate}
\item $g$ is $\epsilon$\mbox{-}close to $f$;
\item $z$ is $\sigma$\mbox{-}close to $y$;
\item $\epsilon + \sigma < \dist(y,f(\partial U))$.
\end{enumerate}
\end{theorem}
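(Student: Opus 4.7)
The plan is to reduce the theorem to two ingredients already at hand: the Homotopy Invariance axiom (from Theorem \ref{theorem_fundamental_properties_admissible}) and the Computation Formula (Theorem \ref{theorem_computation_formula}). The structure of the argument is essentially a two-line syllogism for the main statement, followed by the routine construction of an explicit admissible homotopy to justify the "In particular" clause.

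First I would handle the main statement. By hypothesis there is an admissible homotopy $(H,U,\alpha)$ with $(H(\cdot,0),U,\alpha(0))=(f,U,y)$ and $(H(\cdot,1),U,\alpha(1))=(g,U,z)$, so the Homotopy Invariance Property yields $\deg(f,U,y)=\deg(g,U,z)$. Since $(g,U,z)$ is regular, $g$ is $C^\infty$ and $z$ is a regular value for $g$ in $U$; hence $g$ is Fr\'echet differentiable at every $x\in g^{-1}(z)\cap U$ with nonsingular derivative $g'(x)$. Theorem \ref{theorem_computation_formula} then guarantees that $g^{-1}(z)\cap U$ is finite and
\[
\deg(g,U,z)\;=\sum_{x\in g^{-1}(z)\cap U}\i(g,x),
\]
which, combined with the previous equality, gives \eqref{equation_degree_uniqueness}.

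For the "In particular" clause, I would construct the obvious straight-line homotopy, borrowing the argument used in the proof of Corollary \ref{corollary_locally_constant}. Define
\[
H(x,\lambda)=(1-\lambda)f(x)+\lambda g(x),\qquad \alpha(\lambda)=(1-\lambda)y+\lambda z,
\]
so that $(H,U,\alpha)$ joins $(f,U,y)$ to $(g,U,z)$. The estimate $|H(x,\lambda)-f(x)|\le\epsilon$ holds on $\cl U\times[0,1]$, which, by Exercise \ref{exercise_proper_map}, makes $H$ proper on $\cl U\times[0,1]$; likewise $|\alpha(\lambda)-y|\le\sigma$ on $[0,1]$. For $(x,\lambda)\in\partial U\times[0,1]$ the triangle inequality gives
\[
|H(x,\lambda)-\alpha(\lambda)|\ge |f(x)-y|-\epsilon-\sigma\ge \dist(y,f(\partial U))-\epsilon-\sigma>0,
\]
so $H(x,\lambda)\neq\alpha(\lambda)$ on $\partial U\times[0,1]$, proving admissibility of $(H,U,\alpha)$. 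Hence the first part applies and yields the stated formula.

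The only nontrivial step is the admissibility estimate on the boundary, and that is precisely where the hypothesis $\epsilon+\sigma<\dist(y,f(\partial U))$ is used; once it is in place the rest is a mechanical application of previously established results, so no serious obstacle is expected.
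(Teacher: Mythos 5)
Your proof is correct and takes essentially the same route as the paper: Homotopy Invariance plus the Computation Formula for the main claim, and the straight-line homotopy with the triangle-inequality estimate (as in Corollary \ref{corollary_locally_constant}) for the ``In particular'' clause.
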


\begin{proof}
Let $(g,U,z)$ be a regular triple which can be joined to $(f,U,y)$ via an
admissible homotopy. {}From the Homotopy Invariance axiom one gets
\[
\deg(f,U,y) = \deg(g,U,z),
\]
and the equality \eqref{equation_degree_uniqueness} follows from Theorem
\ref{theorem_computation_formula}.

Assume now that $(g,U,z)$ is a regular triple which satisfies properties $1$,
$2$ and $3$. It is enough to show that the homotopy of triples
$(H,U,\alpha)$, defined by
\[
H(x,\lambda) = (1-\lambda) f(x) + \lambda g(x), \quad \alpha(\lambda) =
(1-\lambda) y + \lambda z,
\]
is admissible on $U$. This can be done as in the proof of Corollary
\ref{corollary_locally_constant}.
\end{proof}

\subsection{First topological applications}
We give now some direct topological applications of the Brouwer degree in
Euclidean spaces.

\smallskip
Let us show, first of all, that the topological degree of a (non-constant)
polynomial is the same as its algebraic degree. This provides one of the many
proofs of the Fundamental Theorem of Algebra (which is actually a result of
topological nature) and justifies the expression ``degree'' used by Brouwer.
In some sense, the Brouwer degree is an extension of the algebraic notion of
degree to more general situations.

\smallskip
As before, if $f: \R^k \to \R^k$ is a proper map, by $\deg(f)$ we shall mean
the integer $\deg(f,\R^k,y)$, where $y$ is any point of $\R^k$. Because of the
Continuous Dependence Property (Theorem \ref{theorem_continuous_dependence}),
$\deg(f)$ is well defined.

Observe that if $H: \R^k \times [0,1] \to \R^k$ is a proper map, then, because
of the Homotopy Invariance Property, $\deg(H(\cdot,\lambda))$ is independent
of $\lambda \in [0,1]$.

\begin{theorem}
\label{theorem_degree_polynomial}
Let $p_n: \C \to \C$ be a polynomial of algebraic degree $n > 0$. Then
$p_n$, regarded as a map from $\R^2$ into itself, has topological degree $n$.
\end{theorem}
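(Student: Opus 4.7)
The plan is to use the Homotopy Invariance Property (Theorem \ref{theorem_fundamental_properties_admissible}) to deform $p_n$, through proper maps, to the pure monomial $a_n z^n$, where $a_n\neq 0$ denotes the leading coefficient of $p_n$, and then to compute the degree of that monomial via the explicit counting formula of Theorem \ref{theorem_computation_formula}.

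First, I would write $p_n(z) = a_n z^n + q(z)$, with $q$ a polynomial of degree at most $n-1$, and introduce the homotopy
\[
H(z,\lambda) = a_n z^n + \lambda\, q(z), \qquad (z,\lambda)\in\C\times[0,1],
\]
which joins $a_n z^n$ at $\lambda = 0$ to $p_n$ at $\lambda = 1$. The key technical verification — and, honestly, the only non-routine step — is that $H$ is proper on $\C\times[0,1]$. This follows from the uniform-in-$\lambda$ lower bound
\[
|H(z,\lambda)| \ge |a_n|\,|z|^n - |q(z)|,
\]
which forces $|H(z,\lambda)|\to+\infty$ as $|z|\to+\infty$ independently of $\lambda$, so that the criterion of Exercise \ref{exercise_proper_map} (applied to the closed unbounded subset $\C\times[0,1]$ of $\R^3$) yields the desired properness. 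Since $\partial\C = \emptyset$, taking any constant path $\alpha\equiv y_0$ makes $(H,\C,\alpha)$ an admissible homotopy, and Homotopy Invariance delivers
\[
\deg(p_n) \;=\; \deg(a_n z^n).
\]

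Second, I would compute $\deg(a_n z^n)$ directly. By Exercise \ref{exercise_polynomial_map}, applied to the polynomial $z\mapsto a_n z^n$ (whose derivative $n a_n z^{n-1}$ vanishes only at $0$), the regular points are exactly the nonzero complex numbers, and at each of them the index equals $+1$. For any fixed $w\in\C\setminus\{0\}$, the equation $a_n z^n = w$ has exactly $n$ distinct solutions, namely the $n$-th roots of $w/a_n$. Theorem \ref{theorem_computation_formula} therefore yields
\[
\deg(a_n z^n,\C,w) \;=\; \sum_{z:\,a_n z^n=w} \i(a_n z^n,z) \;=\; n,
\]
and, since $\partial\C = \emptyset$ makes the map $y\mapsto \deg(a_n z^n,\C,y)$ constant on the connected set $\C$ (Continuous Dependence, Theorem \ref{theorem_continuous_dependence}), we conclude $\deg(a_n z^n) = n$, and hence $\deg(p_n) = n$.

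The single nontrivial point in this plan is the uniform growth estimate that underpins the properness of $H$ — but this is a standard elementary manipulation of polynomial absolute values. Everything else is just assembling the fundamental properties already established.
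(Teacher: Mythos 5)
Your proof is correct and follows essentially the same route as the paper's: the same homotopy $H(z,\lambda) = a_n z^n + \lambda q(z)$, the same uniform-growth argument for properness, and the same appeal to Exercise \ref{exercise_polynomial_map} to count the $n$ roots of $a_n z^n = w$ with index $+1$ each. The only cosmetic difference is that the paper evaluates at the specific value $w = a$ while you take an arbitrary $w\neq 0$; both are immediately justified by the Continuous Dependence Property.
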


\begin{proof}
Write $p_n(z) = az^n + q(z)$, with $a \neq 0$ and $q(z)$ a polynomial
of degree less then $n$. Consider the homotopy
\[
H(z,\lambda) = az^n + \lambda q(z)
\]
and observe that
\[
\lim_{|z| \to +\infty} |H(z,\lambda)| = +\infty,
\]
uniformly with respect to $\lambda \in [0,1]$. Thus $H$ is a proper map and,
consequently, the topological degrees of the two maps $p_n$ and
$f_n: z \mapsto az^n$ are equal. To conclude that the Brouwer degree of $f_n$
is $n$, observe that the equation $az^n = a$ has exactly $n$ solutions each
of them with index one (see Exercise \ref{exercise_polynomial_map}).
\end{proof}

With the same method as in the proof of Theorem
\ref{theorem_degree_polynomial} one can show that if $p_n: \C \to \C$ is a
polynomial of algebraic degree $n > 0$, then the map $f_n: \C \to \C$ given
by $f_n(z) = p_n(\bar z)$, where $\bar z$ is the conjugate of $z$, has
degree $-n$. Thus, we have examples of proper maps from $\R^2$ into $\R^2$ of
arbitrary nonzero degree. A simple (proper) map of degree zero is given by
$f_0(x,y) = (x,y^2)$.

It is easy to check that if $f: \R^2 \to \R^2$ is proper and $I$ denotes the
identity on $\R^s$, then the map
\[
I \times f: \R^s \times \R^2 \to \R^s \times \R^2
\]
is proper and $\deg(I \times f) = \deg(f)$. Thus, if $k > 1$, one can find
maps from $\R^k$ into itself of arbitrary degree.

\begin{exercise}
Show that if $f: \R \to \R$ is proper, then $\deg(f)$ may assume
 only three values: $-1, 0, 1$.
\end{exercise}

{}From Theorem \ref{theorem_degree_polynomial} and the Existence Property of
the degree follows immediately the

\medskip\noindent
\textbf{Fundamental Theorem of Algebra.} {\em Any non-constant polynomial
with complex coefficients admits at least one root.}

\medskip
The following is another famous topological result that can be easily deduced
from degree theory.

\medskip\noindent
\textbf{Brouwer Fixed Point Theorem.} {\em Let $U$ be the open unit ball in
$\R^k$ and let $f: \cl U \to \R^k$ be continuous and such that
$f(\cl U) \subseteq \cl U$ (or, more generally, $f(\partial
U) \subseteq \cl U$). Then $f$ has a fixed point in  $\cl U$.}

\begin{proof}
If the triple $(I-f,U,0)$ is not admissible, then $f$ has a fixed point on
$\partial U$, and we are done. Assume, therefore, this is not the case.
Consider the homotopy $H: \cl U \times [0,1] \to \R^k$ given by
$H(x,\lambda) = x - \lambda f(x)$ and observe that $x \neq \lambda f(x)$ for
all $x \in \partial U$ and $\lambda \in [0,1]$. Thus, by the Homotopy
Invariance Property, we have
\[
\deg(I-f,U,0) = \deg(I,U,0).
\]
On the other hand, the Excision Property implies
\[
\deg(I,U,0) = \deg(I,\R^k,0) = 1.
\]
The result now follows from the Existence Property applied to the equation $x
- f(x) = 0$, $x \in U$.
\end{proof}

We recall that a subset $A$ of a topological space $X$ is a \emph{retract} of
$X$ if there exists a continuous map $r: X \to A$, called \emph{retraction},
whose restriction to $A$ is the identity map. Clearly the boundary of an
interval $[a,b] \subseteq \R$, being disconnected, is not a retract of
$[a,b]$. The following easy consequence of the Boundary Dependence Property
extends this elementary fact.

\begin{theorem}
\label{theorem_no_retraction}
Let $U$ be a bounded open subset of $\R^k$. Then $\partial U$ is not a
retract of $\cl U$.
\end{theorem}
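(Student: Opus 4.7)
The plan is to argue by contradiction, using the Boundary Dependence Property (Theorem \ref{theorem_boundary_dependence}) together with the Existence Property (Theorem \ref{theorem_existence}). Assuming a retraction $r \colon \cl{U} \to \partial U$ exists, the idea is that $r$ agrees with the identity $I$ on $\partial U$ (so the degrees must coincide), yet $r$ avoids all interior points of $U$ (so its degree at any such point must vanish). Since the identity has degree $1$ at any interior point, this is a contradiction.

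More concretely, I would first dispose of the trivial case $U=\emptyset$ (the statement is intended for nonempty $U$; otherwise the empty map would be a retraction) and assume, for contradiction, that a retraction $r\colon \cl{U}\to\partial U$ exists. Pick any $y\in U$. Since $U$ is open, $y\notin\partial U$, so $(I,U,y)$ is admissible; since $\cl U$ is compact (here the boundedness of $U$ enters), $r$ is automatically proper on $\cl U$, and $r(\cl U)\subseteq\partial U$ together with $y\notin\partial U$ gives $y\notin r(\partial U)$, so $(r,U,y)$ is admissible as well.

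Next I would compute each degree. Because $r(\cl U)\subseteq\partial U$ and $y\in U$, the equation $r(x)=y$ has no solution in $U$, so the Existence Property yields $\deg(r,U,y)=0$. On the other hand, $r$ and $I$ coincide on $\partial U$ (by definition of a retraction), so the Boundary Dependence Property gives $\deg(r,U,y)=\deg(I,U,y)$. Finally, using the Excision Property with the open neighborhood $\R^k$ of $\{y\}$ together with the Translation Invariance Property and the Normalization axiom, one obtains $\deg(I,U,y)=\deg(I,\R^k,y)=\deg(I-y,\R^k,0)=\deg(I,\R^k,0)=1$. Combining the two computations yields $0=1$, the desired contradiction.

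The argument is essentially a bookkeeping exercise in the Fundamental Properties, so I do not anticipate any real obstacle. The only point requiring care is the hypothesis checking for Boundary Dependence: one must verify that $U$ is bounded (given), that $r$ and $I$ are both defined on $\cl U$ and agree on $\partial U$ (immediate from the retraction property), and that $y\notin I(\partial U)=\partial U$ (true because $y\in U$ and $U$ is open, so disjoint from $\partial U$).
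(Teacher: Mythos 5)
Your proof is correct and follows essentially the same route as the paper's: apply the Boundary Dependence Property to conclude $\deg(r,U,y)=\deg(I,U,y)=1$ and then obtain a contradiction from the fact that $r$ never takes interior values. Your version merely makes some of the bookkeeping (admissibility checks, the Excision-plus-Normalization computation of $\deg(I,U,y)$, and the trivial case $U=\emptyset$) explicit where the paper leaves them implicit.
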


\begin{proof}
Assume there exists a map $r: \cl U \to \partial U$ such that $r(x) =
x$, $\forall x \in \partial U$. Let $y \in U$. Since $U$ is bounded, the
Boundary Dependence Property (Theorem
\ref{theorem_boundary_dependence}) implies
\[
\deg(r,U,y) = \deg(I,U,y) = 1.
\]
Hence the equation $r(x) = y$ has a solution in $U$, and this is a
contradiction since $r$ maps $\cl U$ onto $\partial U$.
\end{proof}

The fact that the boundary of the subset $(0,+\infty)$ of $\R$ is a
retract of $[0,+\infty)$ shows that in Theorem \ref{theorem_no_retraction} the
assumption that $U$ is bounded cannot be removed. 

\medskip
We give now some applications of degree theory to problems of existence and
multiplicity of solutions for nonlinear equations in $\R^k$.

We start with the following result.

\begin{proposition}
\label{proposition_proper_plus_bounded}
Let $f: \R^k \to \R^k$ be proper and let $g: \R^k \to \R^k$ be a bounded map.
Then $f + g$ is proper and $\deg(f+g) = \deg(f)$. Consequently, if $\deg(f)
\neq 0$, the equation $f(x) + g(x) = 0$ has at least one solution.
\end{proposition}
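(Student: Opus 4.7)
The plan is to verify properness of $f+g$ directly from the characterization in Exercise \ref{exercise_proper_map}, and then connect $f$ to $f+g$ by the obvious straight-line homotopy, invoking the Homotopy Invariance Property on $\R^k$.

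First I would establish that $f+g$ is proper. Let $M = \sup_{x \in \R^k}|g(x)|$, which is finite by hypothesis. Since $f$ is proper and $\R^k$ is closed and unbounded, Exercise \ref{exercise_proper_map} gives $|f(x)| \to +\infty$ as $|x| \to +\infty$. By the reverse triangle inequality, $|f(x)+g(x)| \geq |f(x)| - M$, which also tends to $+\infty$; another application of Exercise \ref{exercise_proper_map} then yields that $f+g$ is proper.

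Next I would define the homotopy $H : \R^k \times [0,1] \to \R^k$ by $H(x,\lambda) = f(x) + \lambda g(x)$ and the constant path $\alpha(\lambda) = y$ for any fixed $y \in \R^k$. The triple $(H, \R^k, \alpha)$ is automatically admissible on the boundary side since $\partial \R^k = \emptyset$, so the only thing to verify is that $H$ is proper on $\R^k \times [0,1]$. Here I would use the uniform estimate $|H(x,\lambda)| \geq |f(x)| - M$, valid for all $\lambda \in [0,1]$; since $[0,1]$ is compact, $|(x,\lambda)| \to +\infty$ in the closed unbounded set $\R^k \times [0,1]$ forces $|x| \to +\infty$, and Exercise \ref{exercise_proper_map} again gives the properness of $H$. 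The Homotopy Invariance Property of Theorem \ref{theorem_fundamental_properties_admissible} then yields $\deg(f, \R^k, y) = \deg(f+g, \R^k, y)$, i.e.\ $\deg(f+g) = \deg(f)$.

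The final conclusion is immediate: if $\deg(f) \neq 0$, then $\deg(f+g, \R^k, 0) = \deg(f+g) \neq 0$, and the Existence Property (Theorem \ref{theorem_existence}) provides a solution of $f(x)+g(x)=0$. The only real obstacle is the properness check for $H$, but since boundedness of $g$ gives a uniform constant $M$ independent of $\lambda$, the estimate is straightforward and the compactness of the parameter interval takes care of the rest.
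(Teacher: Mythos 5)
Your proof is correct and follows essentially the same route as the paper: show properness of $f+g$ and of the homotopy $H(x,\lambda)=f(x)+\lambda g(x)$ via Exercise \ref{exercise_proper_map}, then apply the Homotopy Invariance and Existence Properties. The only difference is that you spell out the uniform estimate $|H(x,\lambda)|\ge |f(x)|-M$ in more detail, which the paper leaves implicit.
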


\begin{proof}
The map $f + g$ is proper (on the basis of Exercise
\ref{exercise_proper_map}), since the assumption
\[
\lim_{\|x\| \to +\infty}\|f(x)\| = +\infty
\]
implies
\[
\lim_{\|x\| \to +\infty} \|f(x) + g(x)\| = +\infty.
\]
For the same reason, also the map $(x,\lambda) \mapsto f(x) + \lambda
g(x)$ is proper, and the equality $\deg(f+g) = \deg(f)$ follows
immediately from the Homotopy Invariance Property.
\end{proof}

Let us show, with a simple example, how the above result can be applied to
prove the existence of solutions of a nonlinear equation in $\R^k$.
\begin{example}
Consider the following nonlinear system of two equations in two unknowns:
\begin{equation}
\label{equation_proper_plus_bounded}
\left\{
\begin{array}{lcc}
x^2 - 2y^2 - \sin(xy) & = & 0\\
xy + 2\cos x + \frac{1}{1+y^2} & = & 0
\end{array}
\right.
\end{equation}
We claim that this system has at least one solution. It is not difficult to
check that the map $f: \R^2 \to \R^2$ given by $f(x,y) = (x^2 - 2y^2,xy)$ is
proper. Indeed, let $c$ be a positive constant and consider the inequalities
\[
|x^2 - 2y^2| \le c \quad \mbox{and} \quad |xy| \le c.
\]
The first one implies that when $|x|$ is large, so is $|y|$; but this is in
contrast with the second one. Thus, if $f(x,y)$ belongs to a compact set,
also $(x,y)$ must stay in a compact set.

To compute the degree of $f$, which is well defined, observe that the system
\[
\left\{
\begin{array}{lcc}
x^2 - 2y^2 & = & 1\\
xy  & = & 0
\end{array}
\right.
\]
has the following two solutions: $(1,0)$ and $(-1,0)$. One can check that
these solutions are both regular with index $1$. Thus $\deg(f) = 2$, and
Proposition \ref{proposition_proper_plus_bounded} implies the existence of at
least one solution of system \eqref{equation_proper_plus_bounded}, as claimed.

Actually, since $\deg(f) = 2$, applying Sard's Lemma (and the definition of
degree for a regular triple) one can say more: for almost all $(a,b) \in \R^2$
the system
\[
\left\{
\begin{array}{lcc}
x^2 - 2y^2 - \sin(xy) & = & a\\
xy + 2\cos x + \frac{1}{1+y^2} & = & b
\end{array}
\right.
\]
has at least two solutions (and, of course, at least one for all $(a,b) \in
\R^2$).
\end{example}

\medskip
{}From Proposition \ref{proposition_proper_plus_bounded} it follows
immediately that a nonlinear system of the type
\begin{equation}
\label{equation_linear_plus_bounded}
Lx + g(x) = 0
\end{equation}
has at least one solution, provided that $L$ is an invertible linear operator
in $\R^k$ and $g: \R^k \to \R^k$ is a bounded map. This, on the other hand,
can be shown also by means of the Brouwer Fixed Point Theorem, since
\eqref{equation_linear_plus_bounded}
can be equivalently written as a fixed point equation in the form $x =
-L^{-1}g(x)$, where the map $x \mapsto -L^{-1}g(x)$ sends the whole space
$\R^k$ into a bounded set (and, in particular, some closed ball into itself).

An extension of this existence result is Corollary
\ref{corollary_linear_plus_nonlinear} below, which is a direct consequence of
the following continuation principle (stated without any notion of degree
theory).

\begin{theorem}[Continuation Principle in Euclidean Spaces]
\label{theorem_continuation_Euclidean}
Let $U$ be a bounded open subset of $\R^k$, $f: \cl U \to \R^k$ a
continuous map of class $C^1$ in a neighborhood of $f^{-1}(0)$, and $h:
\cl U
\times [0,1]
\to
\R^k$ a continuous map. Assume that:
\begin{enumerate}
\item $h(x,0) = 0$ for all $x \in U$;
\item $f(x) + h(x,\lambda) \neq 0$ for all $(x,\lambda) \in \partial U
\times [0,1]$;
\item $\det(f'(x)) \neq 0$ for any $x \in f^{-1}(0)$;
\item the integer
\[
\sum_{x \in f^{-1}(0)} \sign(\det(f'(x)))
\]
is nonzero.
\end{enumerate}
Then, the equation $f(x) + h(x,1) = 0$ has at least one solution in $U$.
\end{theorem}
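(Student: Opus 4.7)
The plan is to reduce the theorem to an application of Homotopy Invariance, the Computation Formula, and the Existence Property of the Brouwer degree. Define $H : \cl U \times [0,1] \to \R^k$ by $H(x,\lambda) = f(x) + h(x,\lambda)$ and the constant path $\alpha(\lambda) \equiv 0$. Since $U$ is bounded, $\cl U \times [0,1]$ is compact, so $H$ is automatically proper on its domain. Assumption (2) is precisely the requirement that $H(x,\lambda) \neq \alpha(\lambda)$ for every $(x,\lambda) \in \partial U \times [0,1]$. Thus $(H,U,\alpha)$ is an admissible homotopy and Theorem \ref{theorem_fundamental_properties_admissible} yields
\[
\deg(H(\cdot,0),U,0) = \deg(H(\cdot,1),U,0).
\]

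Next, I would extend assumption (1) to the closure by continuity: given any $x \in \partial U$, picking a sequence $x_n \in U$ with $x_n \to x$, one has $h(x_n,0) = 0$, hence $h(x,0) = 0$. Therefore $H(\cdot,0) = f$ on $\cl U$, and (2) at $\lambda = 0$ gives $f(x) \neq 0$ on $\partial U$, so $(f,U,0)$ is itself an admissible triple. In particular $f^{-1}(0) \cap \cl U$ is a compact subset of $U$, and by (3) together with the assumption that $f$ is $C^1$ in a neighborhood of $f^{-1}(0)$, the map $f$ is Fr\'echet differentiable with invertible derivative at every $x \in f^{-1}(0) \cap U$. Applying Theorem \ref{theorem_computation_formula} then gives
\[
\deg(f,U,0) = \sum_{x \in f^{-1}(0) \cap U} \i(f,x) = \sum_{x \in f^{-1}(0)} \sign(\det(f'(x))),
\]
which is nonzero by (4).

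Combining the two equalities produces $\deg(f + h(\cdot,1),U,0) \neq 0$, so the Existence Property (Theorem \ref{theorem_existence}) furnishes an $x \in U$ with $f(x) + h(x,1) = 0$, as desired. The only real point requiring care, and the closest thing to an obstacle, is the bookkeeping of admissibility: one must verify that $(f,U,0)$ is admissible on its own (to invoke the Computation Formula) and that the homotopy $H$ really avoids the origin on $\partial U \times [0,1]$. Both facts follow transparently from (2) once (1) has been extended to $\cl U$ by continuity; nothing else is needed beyond the boundedness of $U$, which guarantees properness without any growth hypothesis.
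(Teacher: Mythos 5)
Your proof is correct and follows essentially the same route as the paper's: define $H(x,\lambda) = f(x) + h(x,\lambda)$, invoke Homotopy Invariance via assumption (2), identify $H(\cdot,0)$ with $f$ via assumption (1), compute $\deg(f,U,0)$ by the Computation Formula using (3)--(4), and conclude by the Existence Property. The extra bookkeeping you add (extending $h(\cdot,0)=0$ to $\cl U$ by continuity and noting that boundedness of $U$ gives properness for free) is sound and fills in details the paper leaves implicit.
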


\begin{proof}
Define $H: \cl U \times [0,1] \to \R^k$ by $H(x,\lambda) =
f(x) + h(x,\lambda)$ and observe that, because of assumption $2$, the triple
$(H,U,0)$ is an admissible homotopy. Thus, from the Homotopy Invariance
Property it follows that
\[
\deg(H(\cdot,0),U,0) = \deg(H(\cdot,1),U,0).
\]
On the other hand, because of condition $1$, we have $H(\cdot,0) = f$, and
the assertion now follows from assumptions $3$ and $4$, the Computation
Formula (Theorem \ref{theorem_computation_formula}), and the Existence Property
(Theorem \ref{theorem_existence}).
\end{proof}

The following easy consequence of Theorem
\ref{theorem_continuation_Euclidean} extends the existence result related to
equation \eqref{equation_linear_plus_bounded}, removing the assumption that
the map $g$ is bounded.

\begin{corollary}
\label{corollary_linear_plus_nonlinear}
Let $L$ be a linear operator in $\R^k$ and $g: \R^k \to \R^k$ a continuous
map. If the set
\[
S = \Big\{ x \in \R^k: Lx + \lambda g(x) = 0 \mbox{ for some } \lambda \in
[0,1] \Big\}
\]
is bounded, then the equation $Lx + g(x) = 0$ has at least one solution.
\end{corollary}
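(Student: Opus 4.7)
The plan is to apply the Continuation Principle (Theorem \ref{theorem_continuation_Euclidean}) directly, with $f=L$ and $h(x,\lambda)=\lambda g(x)$. Before checking the four hypotheses, I would make the key observation that the boundedness of $S$ forces $L$ to be invertible. Indeed, taking $\lambda=0$ in the definition of $S$ gives $\ker L\subseteq S$; since $\ker L$ is a linear subspace of $\R^k$, its boundedness forces $\ker L=\{0\}$, so $\det(L)\neq 0$.

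Next, I would choose the open set $U$ to be any open ball $B_r(0)\subset\R^k$ of radius $r$ large enough so that $S\subseteq U$ (this is possible precisely because $S$ is bounded). Then for every $(x,\lambda)\in\partial U\times[0,1]$ we have $x\notin S$, which by the definition of $S$ means $Lx+\lambda g(x)\neq 0$. This is hypothesis $2$ of the Continuation Principle. Hypothesis $1$ is immediate, since $h(x,0)=0\cdot g(x)=0$ for every $x\in U$. The map $f=L$ is linear, hence smooth, so the smoothness requirement near $f^{-1}(0)$ holds automatically.

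For hypotheses $3$ and $4$, the invertibility of $L$ gives $f^{-1}(0)=\{0\}$ and $f'(0)=L$, so $\det(f'(0))=\det(L)\neq 0$, verifying $3$. The single term sum in hypothesis $4$ reduces to
\[
\sum_{x\in f^{-1}(0)}\sign(\det(f'(x)))=\sign(\det(L))\in\{-1,+1\},
\]
which is nonzero. The Continuation Principle then yields a solution $x\in U$ of $f(x)+h(x,1)=Lx+g(x)=0$.

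There is no real obstacle: once one notices that the boundedness of $S$ already implies invertibility of $L$, everything else is a straightforward verification of the hypotheses of Theorem \ref{theorem_continuation_Euclidean}. The only step that requires a moment's thought is the passage from ``$Lx+\lambda g(x)\neq 0$ for $x\in\partial U$'' (needed by the Principle) to the hypothesis ``$S$ is bounded''; this is precisely what the choice $S\subseteq U$ encodes.
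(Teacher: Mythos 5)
Your proposal is correct and follows essentially the same route as the paper: deduce invertibility of $L$ from the boundedness of $S$ (via $\ker L \subseteq S$), take $U$ to be an open ball containing $S$, and apply the Continuation Principle with $f=L$ and $h(x,\lambda)=\lambda g(x)$. You simply spell out the verification of the four hypotheses that the paper leaves implicit.
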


\begin{proof}
Observe that the boundedness of $S$ implies that $L$ is injective and,
consequently, $\det(L) \neq 0$. The assertion now follows from Theorem
\ref{theorem_continuation_Euclidean} with $U$ any open ball containing $S$,
$f = L$, and $h(x,\lambda) = \lambda g(x)$.
\end{proof}

Corollary \ref{corollary_linear_plus_nonlinear} can be proved in a more
elementary way: it can be deduced directly from the Brouwer Fixed
Point Theorem (this is not so for the above continuation principle). Let us
show, briefly, how this can be done.

Define $\sigma: \R^k \to [0,1]$ by $\sigma(x) = \max\{1-\dist(x,S),0\}$ and
observe that any solution of the equation $Lx + \sigma(x)g(x) = 0$ lies in
$S$. Therefore it is also a solution of $Lx + g(x) = 0$, since $\sigma(x) =
1$ for $x \in S$. To show that $Lx + \sigma(x)g(x) = 0$ has a solution, notice
that $L$ is invertible (since $S$ is bounded) and apply the Brouwer Fixed
Point Theorem to the equation $x = -\sigma(x)L^{-1}g(x)$.

\begin{example}
\label{example_linear_plus_nonlinear}
To illustrate how Corollary \ref{corollary_linear_plus_nonlinear} applies,
we prove that the system
\begin{equation*}
\left\{
\begin{array}{lll}
x + y + x^3  + \sin(xy) & = & 0\\
y + 2\cos(xy) + y^5 & = & 0
\end{array}
\right.
\end{equation*}
has at least one solution. For this purpose we need to show that all the
possible solutions $(x,y)$ of the system
\begin{equation*}
\left\{
\begin{array}{lll}
x + y + \lambda x^3  + \lambda\sin(xy) & = & 0\\
y + 2\lambda\cos(xy) + \lambda y^5 & = & 0
\end{array}
\right.
\end{equation*}
are \emph{a priori} bounded when the parameter $\lambda$ varies in $[0,1]$.
In fact, the second equation implies that, if $(x,y)$ is such a solution,
then $y$ must lie in the interval $[-2,2]$ and, as a consequence, from the
first equation one gets $|x| \le 3$.
\end{example}

Degree can be useful to prove the existence of nontrivial solutions of an
equation of the type $f(x) = 0$, where $f: \R^k \to \R^k$ satisfies the
condition $f(0) = 0$. The following result is in this direction.

\begin{theorem}
\label{theorem_nontrivial_solution}
Let $f: \R^k \to \R^k$ be a proper map such that $f(0) = 0$. Assume that $f$
is Fr\'echet differentiable at the origin. If $f'(0)$ is invertible and
$\deg(f) \neq \i(f,0)$, then the equation $f(x) = 0$ has a nontrivial solution 
(i.e.\ a solution $x \neq 0$).
\end{theorem}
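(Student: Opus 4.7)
The plan is to argue by contradiction, showing that if $0$ were the only solution of $f(x)=0$, then $\deg(f)$ would have to equal $\i(f,0)$, violating the hypothesis.

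First, I would use the invertibility of $f'(0)$ to isolate the origin. Since $f'(0)$ is invertible and $f$ is Fréchet differentiable at $0$, the origin is an isolated zero of $f$: indeed, from $f(x) = f'(0)x + \|x\|\epsilon(x)$ with $\epsilon(x)\to 0$ one gets $\|f(x)\|\ge (m-\|\epsilon(x)\|)\|x\|$ for some $m>0$, which is nonzero in a punctured neighborhood of $0$. Thus there is an open ball $B$ centered at $0$ such that $f^{-1}(0)\cap\cl{B}=\{0\}$, and in particular $(f,B,0)$ is an admissible triple.

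Next, I would compute $\deg(f,B,0)$ via the linearization lemma. Since $0$ is the unique solution of $f(x)=0$ in $B$ and $f'(0)\in\GL(\R^k)$, Lemma \ref{lemma_degree_linearization} yields
\[
\deg(f,B,0) \;=\; \deg(f'(0)) \;=\; \sign(\det(f'(0))) \;=\; \i(f,0),
\]
where the second equality uses formula \eqref{formula_degree_endomorphism} and the last one is the definition of the index.

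Finally, suppose for contradiction that $f(x)=0$ has no nontrivial solution. Then $f^{-1}(0)=\{0\}\subseteq B$, so the Excision Property (Theorem \ref{theorem_excision}) gives
\[
\deg(f) \;=\; \deg(f,\R^k,0) \;=\; \deg(f,B,0) \;=\; \i(f,0),
\]
contradicting the assumption $\deg(f)\neq\i(f,0)$. Hence a nontrivial solution must exist. The only mildly delicate step is verifying that $0$ is an isolated zero so that Lemma \ref{lemma_degree_linearization} applies; everything else is a direct invocation of the fundamental properties already established.
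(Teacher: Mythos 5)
Your proof is correct and follows essentially the same approach as the paper: the paper simply invokes the Computation Formula (Theorem \ref{theorem_computation_formula}) directly, which under the assumption $f^{-1}(0)=\{0\}$ gives $\deg(f)=\i(f,0)$, the desired contradiction. Your version unpacks that step by going through Lemma \ref{lemma_degree_linearization} and the Excision Property, which is precisely how the Computation Formula is proved in the paper, so the underlying argument is the same.
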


\begin{proof}
If the equation $f(x) = 0$ had the unique solution $x = 0$, the Computation
Formula would contradict the assumption $\deg(f) \neq \i(f,0)$.
\end{proof}

\begin{example}
Consider the system
\[
\left\{
\begin{array}{lll}
x - 2\sin(x + x^2 - y^2) & = & 0\\
2x + y + 1 - \cos(xy)    & = & 0
\end{array}
\right.
\]
and observe that it admits the trivial solution $(0,0)$. Since the degree of
the invertible linear operator $L: (x,y) \mapsto (x, 2x + y)$ is $1$, by
Proposition \ref{proposition_proper_plus_bounded} the map
\[
(L+g): (x,y) \mapsto (x - 2\sin(x + x^2 - y^2),2x + y + 1 - \cos(xy))
\]
is proper with degree $1$.

Now, notice that the linearized map of $(L + g)$ at the origin is given by
$(x,y) \mapsto (-x, 2x + y)$, whose determinant is negative. Thus, Theorem
\ref{theorem_nontrivial_solution} implies that the above system has at least
one nontrivial solution (very likely, at least two, because of Sard's Lemma
and the Computation Formula).
\end{example}

\medskip
Degree theory has important applications in the study of bifurcation
problems. Let us see its r\^ole in the finite dimensional context.

Let $J$ be a real interval, $U$ an open subset of $\R^k$ containing the
origin $0 \in \R^k$, and $f: J \times U \to \R^k$ a continuous map
satisfying the condition $f(\lambda,0) = 0$ for any $\lambda \in J$.
Consider the equation
\begin{equation}
\label{equation_bifurcation}
f(\lambda,x) = 0.
\end{equation}
Any pair $(\lambda,0)$, with $\lambda \in J$, is called a \emph{trivial
solution} of the above equation and, consequently, any other solution is said
to be \emph{nontrivial}.

A \emph{bifurcation point} of the equation \eqref{equation_bifurcation} is a
number $\lambda_0 \in J$ (or, equivalently, a trivial solution  $(\lambda_0,0)
\in J \times U$) with the property that any neighborhood of $(\lambda_0,0)$
contains nontrivial solutions.

For example, if $f$ has the special form
\[
f(\lambda,x) = \lambda x - Lx,
\]
where $L$ is a linear operator in $\R^k$, any eigenvalue of $L$ is a
bifurcation point.
Therefore, in some sense, a bifurcation point is the nonlinear analogue of
what in the liner case is the eigenvalue.

Assume now that $f$ is continuously differentiable with respect to the second
variable (at least in a neighborhood of the set $J \times \{0\}$ of trivial
solutions) and let $\lambda_0 \in J$ be given. If the partial derivative
$\partial_2 f(\lambda_0,0)$ is nonsingular, then the Implicit Function
Theorem implies that in a convenient neighborhood $I \times V$ of
$(\lambda_0,0)$ the set $f^{-1}(0)$ is the graph of a map from $I$ to $V$.
Consequently, the assumption $f(\lambda,0) \equiv 0$ implies that
$\lambda_0$ is not a bifurcation point. We have, therefore, the following
result.

\begin{theorem}[Necessary Condition for Bifurcation]
Let $f$ be as above, and consider the (continuous) real function
$\varphi(\lambda) = \det(\partial_2 f(\lambda,0))$, $\lambda \in J$. If
$\lambda_0 \in J$ is a bifurcation point for the equation $f(\lambda,x) = 0$,
then $\varphi(\lambda_0) = 0$.
\end{theorem}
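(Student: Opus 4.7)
My plan is to argue by contrapositive: assume $\varphi(\lambda_0)\neq 0$ and deduce that $\lambda_0$ is not a bifurcation point. This reduces the statement to a direct application of the Implicit Function Theorem, essentially packaging the discussion that immediately precedes the theorem into a formal proof.

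First I would invoke the hypothesis that $f$ is $C^1$ in the second variable in a neighborhood of $J\times\{0\}$ together with the continuity of $f$, so that the map $(\lambda,x)\mapsto f(\lambda,x)$ satisfies the hypotheses of the Implicit Function Theorem at the point $(\lambda_0,0)$ once we know that the partial derivative $\partial_2 f(\lambda_0,0)\in\L(\R^k,\R^k)$ is invertible. The latter is exactly the assumption $\varphi(\lambda_0)=\det(\partial_2 f(\lambda_0,0))\neq 0$. Applying the Implicit Function Theorem then produces an open neighborhood $I\times V$ of $(\lambda_0,0)$ in $J\times U$ and a continuous function $\psi\colon I\to V$ such that
\[
\{(\lambda,x)\in I\times V: f(\lambda,x)=0\}=\{(\lambda,\psi(\lambda)): \lambda\in I\}.
\]

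Next I would exploit the standing assumption $f(\lambda,0)=0$ for all $\lambda\in J$. This guarantees that, for each $\lambda\in I$, the point $(\lambda,0)$ lies in the solution set above, so by the uniqueness clause of the Implicit Function Theorem we must have $\psi(\lambda)=0$ identically on $I$. Therefore the solution set of $f(\lambda,x)=0$ inside $I\times V$ consists exactly of trivial solutions $(\lambda,0)$, and $(\lambda_0,0)$ admits the neighborhood $I\times V$ containing no nontrivial solutions. By the definition of bifurcation point, $\lambda_0$ is not one, which is the desired contrapositive.

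I do not expect any genuine obstacle: the argument is entirely formal once the Implicit Function Theorem is available, and no degree-theoretic machinery is needed for the necessary condition (degree will be used, in contrast, for sufficient conditions later). The only minor subtlety is checking that the neighborhood $I\times V$ provided by the Implicit Function Theorem can indeed be chosen inside $J\times U$; this is immediate because $J\times U$ is already open and $(\lambda_0,0)\in J\times U$, so one simply shrinks $I$ and $V$ if necessary.
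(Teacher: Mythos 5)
Your argument is correct and coincides with the paper's own: the paper proves the statement in the paragraph immediately preceding it, by invoking the Implicit Function Theorem at $(\lambda_0,0)$ under the assumption $\det(\partial_2 f(\lambda_0,0))\neq 0$ and then using $f(\lambda,0)\equiv 0$ together with local uniqueness to conclude that all solutions near $(\lambda_0,0)$ are trivial. You have simply written out this contrapositive argument as a formal proof, which is exactly what the paper intends.
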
 

There are simple examples showing that the condition $\varphi(\lambda_0) = 0$
is not sufficient for $\lambda_0$ to be a bifurcation point. Perhaps, the
simplest one is given by $f: \R^2 \to \R$ defined as $f(\lambda,x) =
(\lambda^2 + x^2)x$, in which $\lambda_0 = 0$.

A sufficient condition for $\lambda_0$ to be a bifurcation point is that
$\varphi(\lambda)$ changes sign at
$\lambda_0$. In fact, we have the following result.

\begin{theorem}[Sufficient Condition for Bifurcation]
\label{theorem_bifurcation_sufficient}
Let $[a,b]$ be a real interval, $U$ an open subset of $\R^k$ containing the
origin $0 \in \R^k$, and $f$ a continuous map from $[a,b] \times U$ into
$\R^k$ satisfying the condition $f(\lambda,0) = 0$ for any $\lambda$ in
$[a,b]$. Assume that $f$ is differentiable with respect to the second
variable at any trivial solution
$(\lambda,0)$ of the equation $f(\lambda,x) = 0$, and define $\varphi: [a,b]
\to \R$ by $\varphi(\lambda) = \det(\partial_2 f(\lambda,0))$.

If $\varphi(a) \varphi(b) < 0$, then the interval $[a,b]$ contains at least
one bifurcation point. In particular, if $\varphi(\lambda)$ has a sign-jump at
some $\lambda_0 \in [a,b]$, then $\lambda_0$ is a bifurcation point.
\end{theorem}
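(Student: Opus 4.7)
The strategy is to argue by contradiction, assuming that no $\lambda\in[a,b]$ is a bifurcation point, and then to show via the Homotopy Invariance Property that $\sign\varphi(a)=\sign\varphi(b)$, contradicting the hypothesis $\varphi(a)\varphi(b)<0$. The bridge between bifurcation and degree is this: if $0$ is an isolated zero of $f(\lambda,\cdot)$ in some ball and $\partial_2 f(\lambda,0)$ is invertible, then Lemma \ref{lemma_degree_linearization} combined with formula \eqref{formula_degree_endomorphism} gives $\deg(f(\lambda,\cdot),B_r(0),0)=\sign\varphi(\lambda)$.

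First I would localize uniformly in $\lambda$. Under the non-bifurcation assumption, each $(\lambda,0)$ with $\lambda\in[a,b]$ admits an open neighborhood in $[a,b]\times U$ containing only trivial solutions of $f(\lambda,x)=0$. A tube-lemma argument over the compact segment $[a,b]\times\{0\}$ then produces a single $r>0$ with $\cl{B_r(0)}\subseteq U$ such that $f(\lambda,x)\neq 0$ for every $(\lambda,x)\in[a,b]\times\bigl(\cl{B_r(0)}\setminus\{0\}\bigr)$. This uniform radius is the main technical point. With it in hand, set $H(x,\lambda)=f(\lambda,x)$ and $\alpha(\lambda)\equiv 0$ on $\cl{B_r(0)}\times[a,b]$: then $(H,B_r(0),\alpha)$ is an admissible homotopy of triples, because its domain is compact (so $H$ is automatically proper) and $H(x,\lambda)\neq\alpha(\lambda)$ on $\partial B_r(0)\times[a,b]$ by the previous step (after, if necessary, reparametrising $\lambda\in[0,1]$). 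Theorem \ref{theorem_fundamental_properties_admissible} (Homotopy Invariance) yields
\[
\deg(f(a,\cdot),B_r(0),0)=\deg(f(b,\cdot),B_r(0),0).
\]

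At each endpoint $\lambda\in\{a,b\}$, the map $f(\lambda,\cdot)$ has the unique zero $x=0$ in $\cl{B_r(0)}$, is Fr\'echet differentiable at $0$, and has invertible derivative $\partial_2 f(\lambda,0)$ since $\varphi(\lambda)\neq 0$. Lemma \ref{lemma_degree_linearization} together with \eqref{formula_degree_endomorphism} therefore gives $\deg(f(\lambda,\cdot),B_r(0),0)=\deg(\partial_2 f(\lambda,0))=\sign\varphi(\lambda)$, and the previous displayed equality forces $\sign\varphi(a)=\sign\varphi(b)$, the desired contradiction. For the final sign-jump assertion, I would first observe that the set $B$ of bifurcation points in $[a,b]$ is closed: if $\lambda_n\to\lambda_0$ with each $\lambda_n\in B$, a diagonal sequence of nontrivial solutions $(\mu_n,x_n)$ accumulating at $(\lambda_n,0)$ accumulates also at $(\lambda_0,0)$. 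The hypothesis of a sign-jump at $\lambda_0$ produces sequences $s_n\to\lambda_0^-$ and $t_n\to\lambda_0^+$ with $\varphi(s_n)\varphi(t_n)<0$; the part already proved yields a bifurcation point in each $[s_n,t_n]$, and closedness of $B$ forces $\lambda_0\in B$. The main obstacle, as noted, is the uniform localization step, which relies essentially on the compactness of $[a,b]$.
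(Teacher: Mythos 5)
Your proof is correct and follows essentially the same route as the paper's: contradiction from the non-bifurcation assumption, a compactness/tube-lemma step to obtain a uniform neighborhood of the origin free of nontrivial solutions, Homotopy Invariance to equate the degrees at $\lambda=a$ and $\lambda=b$, and the linearization computation (your use of Lemma \ref{lemma_degree_linearization} together with formula \eqref{formula_degree_endomorphism} is exactly what the Computation Formula, Theorem \ref{theorem_computation_formula}, packages). You additionally spell out the "in particular" sign-jump clause via closedness of the set of bifurcation points, which the paper leaves implicit.
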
 

\begin{proof}
Assume the contrary. Then, by the compactness of $[a,b]$, there exists a
\emph{bounded} open neighborhood $V$ of $0$ such that $\cl V \subseteq
U$ and that the solutions of the equation $f(\lambda,x) = 0$ which are in
$[a,b]
\times
\cl V$ are all trivial. In particular
\[
f(\lambda,x) \neq 0, \quad \forall (\lambda,x) \in [a,b] \times \partial V.
\]
Thus, the Homotopy Invariance Property implies
\[
\deg(f(a,\cdot),V,0) = \deg(f(b,\cdot),V,0).
\]
On the other hand, by the Computation Formula (Theorem
\ref{theorem_computation_formula}), we get
\[
\deg(f(a,\cdot),V,0) = \sign(\varphi(a))\quad \mbox{and} \quad
\deg(f(b,\cdot),V,0) = \sign(\varphi(b)),
\]
contradicting the assumption $\varphi(a) \varphi(b) < 0$.
\end{proof}

The following simple example illustrates how Theorem
\ref{theorem_bifurcation_sufficient} applies.

\begin{example}
The system
\begin{equation}
\label{equation_bifurcation_example}
\left\{
\begin{array}{lcc}
x - \lambda\sin(x + x^2 - y^2) & = & 0\\
2x + y + 1 - \cos xy  & = & 0
\end{array}
\right.
\end{equation}
has a bifurcation point at $\lambda = 1$. To see this consider the linearized
problem
\[
\left\{
\begin{array}{lcc}
x - \lambda x & = & 0\\
2x + y  & = & 0
\end{array}
\right.
\]
(of \eqref{equation_bifurcation_example} at the origin of $\R^2$) and observe
that the function
\[
\varphi(\lambda) = \det
\begin{pmatrix}
1 - \lambda & 0 \\
2 & 1
\end{pmatrix}
\]
has a sign-jump at $\lambda = 1$.
\end{example}

\section{The extended case}\label{excase}
In this section we extend the Brouwer degree to the class of \emph{weakly
admissible} triples; that is, triples of the type $(f,U,y)$, where $U
\subseteq \R^k$ is an open set, $f$ is an $\R^k$\mbox{-}valued map
defined (at least) on $U$, and $y \in \R^k$ is such that $f^{-1}(y) \cap U$
is compact.

The Excision Property of the degree for admissible triples (Theorem
\ref{theorem_excision}) shows that the following definition is well posed.

\begin{definition}
\label{definition_degree_weakly_regular}
The \emph{Brouwer degree} of a weakly admissible triple $(f,U,y)$ is the
integer
\[
\deg(f,U,y)\; := \deg(f,V,y),
\]
where $V$ is any bounded open neighborhood of $f^{-1}(y) \cap U$ such that
$\cl V \subseteq U$.
\end{definition}

Let, as above, $U$ be an open subset of $\R^k$, $H$ an $\R^k$\mbox{-}valued
map defined (at least) on $U \times [0,1]$, and $\alpha: [0,1] \to \R^k$ a
path. The triple $(H,U,\alpha)$ is said to be a \emph{weakly admissible
homotopy of triples} if the set
\[
\Sigma = \Big\{ (x,\lambda) \in U \times [0,1]: H(x,\lambda) =
\alpha(\lambda) \Big\}
\]
is compact. 

\medskip
The following result is a direct consequence of Theorem
\ref{theorem_fundamental_properties_admissible}.

\begin{theorem}
\label{theorem_fundamental_properties_weakly_admissible}
The Brouwer degree for weakly admissible triples satisfies the following three
\emph{fundamental properties}:
\begin{itemize}
\item[] \emph{(Normalization)} $\deg(I,\R^k,0) = 1$, where $I$
denotes the identity on $\R^k$;
\item[] \emph{(Additivity)} if $(f,U,y)$ is weakly admissible, and $U_1$ and
$U_2$ are two disjoint open subsets of $U$ such that $f^{-1}(y) \cap U
\subseteq U_1
\cup U_2$, then
\[
\deg(f,U,y) = \deg(f,U_1,y) + \deg(f,U_2,y);
\]
\item[] \emph{(Homotopy Invariance)} if $(H,U,\alpha)$ is a weakly
admissible homotopy of triples, then
\[
\deg(H(\cdot,0),U,\alpha(0)) = \deg(H(\cdot,1),U,\alpha(1)).
\]
\end{itemize}
\end{theorem}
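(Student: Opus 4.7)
The plan is to reduce each of the three properties to the corresponding property in Theorem \ref{theorem_fundamental_properties_admissible}, via Definition \ref{definition_degree_weakly_regular} and the Excision Property. In every case the strategy is: pick bounded open sets, inside the given weakly admissible ones, whose closures lie in the original domain and which contain all relevant solution sets; on such sets the triples are admissible in the ordinary sense and the earlier theorem applies.

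For Normalization, the set $I^{-1}(0)=\{0\}$ is compact, so $(I,\R^k,0)$ is weakly admissible. Choosing any bounded open $V$ with $0\in V$ and $\cl V\subseteq \R^k$, Definition \ref{definition_degree_weakly_regular} gives $\deg(I,\R^k,0)=\deg(I,V,0)$, and by Excision (Theorem \ref{theorem_excision}) this equals $\deg(I,\R^k,0)$ in the admissible sense, which is $1$ by Theorem \ref{theorem_fundamental_properties_admissible}.

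For Additivity, note first that $f^{-1}(y)\cap U$ is the disjoint union of the two compact sets $K_i:=f^{-1}(y)\cap U_i$ (both relatively closed in the compact set $f^{-1}(y)\cap U$). Since $K_i$ is compact and $U_i$ is open, choose bounded open $V_i\subseteq U_i$ with $K_i\subseteq V_i$ and $\cl{V_i}\subseteq U_i$; set $V:=V_1\cup V_2$, so $V$ is bounded, $\cl V\subseteq U$, and $V\supseteq f^{-1}(y)\cap U$. Then $(f,V,y)$, $(f,V_1,y)$, $(f,V_2,y)$ are admissible triples with $V_1$ and $V_2$ disjoint and $f^{-1}(y)\cap V\subseteq V_1\cup V_2$, so the admissible Additivity yields $\deg(f,V,y)=\deg(f,V_1,y)+\deg(f,V_2,y)$. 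Finally, Definition \ref{definition_degree_weakly_regular} identifies $\deg(f,U,y)$ with $\deg(f,V,y)$ and each $\deg(f,U_i,y)$ with $\deg(f,V_i,y)$.

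For Homotopy Invariance, let $\Sigma$ be the compact solution set of the weakly admissible homotopy $(H,U,\alpha)$ and let $K:=\pi(\Sigma)\subseteq U$ be its projection onto $U$, which is compact. Choose a bounded open $W$ with $K\subseteq W$ and $\cl W\subseteq U$. Because $\cl W\times[0,1]$ is compact and contained in the domain $U\times[0,1]$ where $H$ is continuous, $H$ is automatically proper on $\cl W\times[0,1]$. Moreover $\partial W\cap K=\emptyset$, so $H(x,\lambda)\neq\alpha(\lambda)$ for every $(x,\lambda)\in\partial W\times[0,1]$; hence $(H,W,\alpha)$ is an admissible homotopy, and Theorem \ref{theorem_fundamental_properties_admissible} gives
\[
\deg(H(\cdot,0),W,\alpha(0))=\deg(H(\cdot,1),W,\alpha(1)).
\]
Since, for $\lambda\in\{0,1\}$, the set $H(\cdot,\lambda)^{-1}(\alpha(\lambda))\cap U$ is the $\lambda$\nobreakdash-slice of $\Sigma$, which lies in $K\subseteq W$, Definition \ref{definition_degree_weakly_regular} identifies each side above with $\deg(H(\cdot,\lambda),U,\alpha(\lambda))$, completing the proof.

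There is no real obstacle here; the only point requiring some care is the choice, in the homotopy step, of a single bounded open $W$ whose closure sits inside $U$ and that absorbs the entire projection of $\Sigma$, so that admissibility of $(H,W,\alpha)$ and the applicability of Definition \ref{definition_degree_weakly_regular} are simultaneously guaranteed.
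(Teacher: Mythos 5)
Your proof is correct, and it supplies exactly the details the paper leaves implicit: the paper merely asserts that this theorem is ``a direct consequence of Theorem \ref{theorem_fundamental_properties_admissible}'' and gives no argument, so your reduction---via Definition \ref{definition_degree_weakly_regular} and the Excision Property, choosing bounded open sets with compact closure inside $U$ (respectively $U_i$, respectively $U\times[0,1]$) that absorb the compact solution sets---is precisely the intended route. One small observation worth adding in the Additivity step is that disjointness of $V_1$ and $V_2$ is automatic because $V_i\subseteq U_i$ and the $U_i$ are disjoint; you use this implicitly but do not state it.
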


Given an open subset $U$ of $\R^k$ and an $\R^k$\mbox{-}valued map $f$
defined (at least) on $U$, the integer $\deg(f,U,y)$ does not
necessarily depend continuously on $y$. For instance, the triple
$(\exp,\R,y)$ is weakly admissible for all $y \in
\R$, but the map $y \mapsto \deg(\exp,\R,y)$ is discontinuous at $y = 0$. 
To avoid this inconvenience, given $U$ and $f$ as above, we weed out
a subset of $\R^k$, called \emph{boundary set of $f$ in $U$}, with the
property that the map $y \mapsto \deg(f,M,y)$ turns out to be well defined and
continuous in the complement of this set. Moreover, when $f$ is proper on
$\cl U$, this set coincides with $f(\partial U)$.

Given $y \in \R^k$, we say that $f$ is {\it $y$\mbox{-}proper in $U$} if there
exists a neighborhood $V$ of $y$ such that $f^{-1}(K) \cap U$ is compact for
any compact subset $K$ of $V$ (this means that the restriction of $f$ from $U
\cap f^{-1}(V)$ into $V$ is proper). Clearly, the set
\[
\big\{y \in \R^k: f \mbox{ is $y$-proper in } U \big\}
\]
is open in $\R^k$. Consequently, its complement, called the {\it boundary
set of $f$ in $U$} and denoted by $\partial (f,U)$, is closed.

Clearly $\deg(f,U,y)$ is defined for any $y \in \R^k \setminus
\partial(f,U)$ and, because of the homotopy property, depends continuously on
$y$.

\begin{exercise}
Let $f: U \to \R^k$ be a map defined on an open subset $U$ of $\R^k$. Prove
that $f$ is proper if and only if $\partial (f,U) = \emptyset$.
\end{exercise}

\begin{exercise}
Let $f$ be an $\R^k$\mbox{-}valued map defined (at least) on an open set $U
\subseteq \R^k$. Show that $f(\partial U) \subseteq \partial(f,U)$. If, in
addition, $f$ is proper on $\cl U$, prove that $f(\partial U) =
\partial(f,U)$.
\end{exercise}

The following result is an useful extension of the above Homotopy Invariance
Property.

\begin{theorem}[General Homotopy Invariance Property]
\label{theorem_general_homotopy_invariance} Let $H$ be an
$\R^k$\mbox{-}valued map defined on an open subset $W$ of $\R^k
\times [0,1]$ and $\alpha: [0,1] \to \R^k$ a path. If the set
\[
\Sigma = \big\{ (x,\lambda) \in W: H(x,\lambda) =
\alpha(\lambda) \big\}
\]
is compact, then
\[
\deg(H_\lambda, W_\lambda, \alpha(\lambda))
\]
does not depend on $\lambda \in [0,1]$, where $H_\lambda: W_\lambda \to \R^k$
denotes the partial map $H(\cdot,\lambda)$ defined on the slice $W_\lambda =
\{x \in \R^k: (x,\lambda) \in W \}$.
\end{theorem}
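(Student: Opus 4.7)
The plan is to reduce to the ordinary Homotopy Invariance for weakly admissible homotopies (Theorem \ref{theorem_fundamental_properties_weakly_admissible}) by a local analysis in $\lambda$. First I would verify that the function $d(\lambda) := \deg(H_\lambda, W_\lambda, \alpha(\lambda))$ is well defined for every $\lambda \in [0,1]$: the slice $\Sigma \cap (\R^k \times \{\lambda\})$ is the intersection of the compact set $\Sigma$ with a closed set, hence compact, and its image under projection to $\R^k$ is exactly $H_\lambda^{-1}(\alpha(\lambda)) \cap W_\lambda$, so the triple $(H_\lambda, W_\lambda, \alpha(\lambda))$ is weakly admissible. Then I would show $d$ is locally constant on $[0,1]$; since $[0,1]$ is connected, this forces $d$ to be constant, which is the assertion.

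To prove local constancy at a fixed $\lambda_0 \in [0,1]$, I would carve out a tube $\cl V \times I$ inside $W$ on which the desired homotopy invariance applies. Choose a bounded open set $V \subseteq \R^k$ with $\Sigma_{\lambda_0} \subseteq V$ and $\cl V \subseteq W_{\lambda_0}$; such $V$ exists because $\Sigma_{\lambda_0}$ is a compact subset of the open set $W_{\lambda_0}$. Two compactness arguments then produce a relatively open interval $I \subseteq [0,1]$ containing $\lambda_0$ with the following two properties:
\begin{itemize}
\item[(a)] $\cl V \times I \subseteq W$, obtained from the tube lemma applied to the compact set $\cl V$ and the open neighborhood $W$ of $\cl V \times \{\lambda_0\}$;
\item[(b)] $\Sigma_\lambda \subseteq V$ for every $\lambda \in I$, obtained by noting that $\Sigma \cap ((\R^k \setminus V) \times [0,1])$ is a closed subset of the compact set $\Sigma$, hence compact, and its projection onto $[0,1]$ is a compact subset of $[0,1]$ that does not meet $\lambda_0$.
\end{itemize}

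With $V$ and $I$ in hand, for every $\lambda \in I$, the definition of the weakly admissible degree (Definition \ref{definition_degree_weakly_regular}) together with (a) and (b) yields
\[
\deg(H_\lambda, W_\lambda, \alpha(\lambda)) = \deg(H_\lambda, V, \alpha(\lambda)),
\]
since $V$ is a bounded open neighborhood of $\Sigma_\lambda$ with $\cl V \subseteq W_\lambda$. On the other hand, after an affine reparameterization of $I$ onto $[0,1]$, the restriction of $H$ to $\cl V \times I$ defines a weakly admissible homotopy on $V$: its zero set equals $\Sigma \cap (\R^k \times I)$, which is closed in $\Sigma$ and therefore compact. Theorem \ref{theorem_fundamental_properties_weakly_admissible} then gives that $\deg(H_\lambda, V, \alpha(\lambda))$ is independent of $\lambda \in I$, establishing local constancy of $d$ at $\lambda_0$.

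The main technical obstacle is the simultaneous achievement of (a) and (b). Property (a) is needed so that $H$ is actually defined on $\cl V \times I$ with values controlled up to the boundary of $V$, and property (b) is needed so that no zeros escape $V$ as $\lambda$ moves away from $\lambda_0$. Neither follows from a single tube-lemma argument because $W$ is an arbitrary open subset of $\R^k \times [0,1]$ rather than a product; the key is that compactness of $\Sigma$ (not merely of each slice) is precisely what lets us shrink $I$ in the second compactness argument.
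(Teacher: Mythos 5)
Your overall strategy matches the paper's: fix $\lambda_0$, choose a bounded open $V$ with $\Sigma_{\lambda_0}\subseteq V$ and $\cl V\subseteq W_{\lambda_0}$, then use two compactness arguments to get a parameter interval $I$ on which $\cl V\times I\subseteq W$ and $\Sigma_\lambda\subseteq V$, and finally compare with $\deg(H_\lambda,V,\alpha(\lambda))$ via the Excision/definition of the weakly admissible degree. However, there is a genuine slip in the last step. You take $I$ to be a \emph{relatively open} interval (which is what the tube lemma and your projection argument naturally produce), and then assert that the zero set $\Sigma\cap(\R^k\times I)$ is ``closed in $\Sigma$ and therefore compact.'' This is false when $I$ is open: $\R^k\times I$ is then open in $\R^k\times[0,1]$, so $\Sigma\cap(\R^k\times I)$ is open, not closed, in $\Sigma$. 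Concretely, if $\Sigma$ contains a sequence $(x_n,\lambda_n)$ with $\lambda_n\in I$ converging to an endpoint of $I$, the limit escapes $\R^k\times I$, so compactness fails. Without compactness, the restricted homotopy is not weakly admissible, and the appeal to Theorem~\ref{theorem_fundamental_properties_weakly_admissible} does not apply.

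The fix is to take $I$ \emph{closed}, as the paper does: from your open interval pass to a closed subinterval $I'=[\lambda_0-\delta,\lambda_0+\delta]\cap[0,1]$. Then $\cl V\times I'$ is compact, $H$ is automatically proper on it, and $\Sigma_\lambda\subseteq V$ gives $H(x,\lambda)\ne\alpha(\lambda)$ for $(x,\lambda)\in\partial V\times I'$. At that point you do not even need the weakly admissible version: the restriction of $(H,\alpha)$ to $V\times I'$ is an \emph{admissible} homotopy of triples, and Theorem~\ref{theorem_fundamental_properties_admissible} applies directly, which is precisely what the paper does. Local constancy at $\lambda_0$ (with $\lambda_0$ in the relative interior of $I'$) then finishes the argument.
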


\begin{proof}
Clearly, given $\lambda \in [0,1]$, $\deg(H_\lambda, W_\lambda,
\alpha(\lambda))$ is well defined, since the set 
$H_\lambda^{-1}(\alpha(\lambda))$ coincides with the $\lambda$\mbox{-}slice
$\Sigma_\lambda$ of $\Sigma$, which is compact. Therefore, it is enough to
show that the function $\varphi: [0,1] \to \Z$ given by $\varphi(\lambda)
= \deg(H_\lambda, W_\lambda, \alpha(\lambda))$ is locally constant. For this
purpose, fix any $\mu \in [0,1]$ and consider any bounded open neighborhood
$V$ of $\Sigma_\mu$ such that $\cl V \subseteq W_\mu$. Since $\cl
V$ is compact, there exists a closed neighborhood $I_\delta = [\mu - \delta,
\mu + \delta] \cap [0,1]$ of $\mu$ in $[0,1]$ such that $\cl V \times
I_\delta \subseteq W$.

We claim that, if $\delta$ is sufficiently small, then $\Sigma_\lambda
\subseteq V$ for all $\lambda \in I_\delta$. Assume the contrary.  Thus,
there exists a sequence $\{(x_n,\lambda_n)\}$ in $\Sigma$ such that $\lambda_n
\to \mu$ and $x_n \notin V$ for all $n \in \N$. Because of the compactness of
$\Sigma$ (and the fact that $V$ is open) we may assume that $x_n \to y \notin
V$. Therefore $(y,\mu) \in \Sigma$, which implies $y \in \Sigma_\mu$; and this
is a contradiction since, by assumption, $\Sigma_\mu \subseteq V$.

Assume, without loss of generality,  $\Sigma_\lambda \subseteq V$ for all
$\lambda \in I_\delta$. Since, in addition, $\cl V$ is compact and
contained in $W_\lambda$ for all $\lambda \in I_\delta$, by Definition
\ref{definition_degree_weakly_regular} we get
\[
\deg(H_\lambda, W_\lambda, \alpha(\lambda)) = 
\deg(H_\lambda, V, \alpha(\lambda)), \quad \forall \lambda \in I_\delta.
\]
Now, observe that $H(x,\lambda) \neq \alpha(\lambda)$ for all
$(x,\lambda) \in \partial V \times I_\delta$. Thus, the Homotopy Invariance
Property of the degree (in Theorem
\ref{theorem_fundamental_properties_admissible}) implies that
$\varphi(\lambda)$ does not depend on $\lambda \in I_\delta$, and the 
assertion follows since $\mu$ is arbitrary.
\end{proof}

\section{Appendix: proof of the Homotopy Invariance Property for regular triples}%
\label{appendix}
Our purpose, here, is to prove a crucial result in the construction of the
Brouwer degree: the Homotopy Invariance Property for regular triples (see
Theorem \ref{theorem_fundamental_properties_regular}).

\medskip
Let $U$ be open in $\R^k$ and $f: \cl U \to \R^k$ a proper smooth map
(we recall that $f$ is smooth on $\cl U$ if it admits a smooth
extension on an open set containing $\cl U$). Consider the closed set
\[
K = \{x \in \cl U: \det(f'(x)) = 0 \}
\]
of the critical points of $f$ in $\cl U$. Recalling that proper maps
are closed (see Exercise \ref{exercise_proper_is_closed}), the set
\[
W = \R^k \setminus (f(\partial U) \cup f(K))
\]
of the points $y$ for which $(f,U,y)$ is a regular triple is open. As already
pointed out, for any $y \in W$, the set $f^{-1}(y)$ is compact and
discrete, therefore finite.

\medskip
We need the following lemma which cannot be considered as a particular case of 
Theorem \ref{theorem_continuous_dependence} since the latter has been deduced from 
the three Fundamental Properties.

\begin{lemma}
\label{lemma_locally_constant}
Let $U$, $f$ and $W$ be as above. Then the map
\[
\deg(f,U,\cdot): W \to \Z,
\]
is locally constant.
\end{lemma}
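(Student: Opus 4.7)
The plan is to fix an arbitrary $y_0 \in W$ and exhibit an open neighborhood of $y_0$ in $W$ on which $\deg(f,U,\cdot)$ is identically equal to $\deg(f,U,y_0)$. Since $(f,U,y_0)$ is regular, the set $f^{-1}(y_0)\cap U$ is finite, say $\{x_1,\ldots,x_n\}$ (possibly empty); and because $y_0\notin f(\partial U)$, one has $f^{-1}(y_0)\cap\cl U = f^{-1}(y_0)\cap U$.

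First I would apply the Inverse Function Theorem at each $x_i$: since $f'(x_i)$ is invertible, there exist open neighborhoods $V_i\subseteq U$ of $x_i$, pairwise disjoint, and an open neighborhood $W_0$ of $y_0$, such that each restriction $f|_{V_i}:V_i\to W_0$ is a $C^\infty$ diffeomorphism. Shrinking the $V_i$ if necessary, continuity of $\det f'$ lets me assume that $\sign(\det f'(x))=\i(f,x_i)$ for every $x\in V_i$.

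Next, and this is the step where properness enters, I would rule out the appearance of ``stray'' preimages near $y_0$. Set $C=\cl U\setminus\bigcup_{i=1}^n V_i$. Since each $V_i$ is open in $\R^k$ and $\cl U$ is closed, $C$ is closed in $\R^k$. By Exercise~\ref{exercise_proper_is_closed}, $f$ is a closed map on $\cl U$, so $f(C)$ is closed. By construction $y_0\notin f(C)$, hence there is an open neighborhood $W_1$ of $y_0$ with $W_1\cap f(C)=\emptyset$. Put $W^*:=W_0\cap W_1$.

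Finally, for any $y\in W^*$, any solution $x\in\cl U$ of $f(x)=y$ must lie in $\bigcup V_i$ (from $W_1$), in particular in $U$, so $y\notin f(\partial U)$; and because $f|_{V_i}$ is a diffeomorphism onto $W_0\supseteq W^*$, each $V_i$ contains exactly one preimage of $y$, necessarily regular with index $\i(f,x_i)$. Consequently $y\in W$ and
\[
\deg(f,U,y)=\sum_{i=1}^n\i(f,x_i)=\deg(f,U,y_0),
\]
which proves local constancy. The conceptually delicate point is Step~3: I need properness (through the closed-map property) to guarantee that preimages cannot migrate from outside $\bigcup V_i$ into arbitrarily small neighborhoods of $y_0$; without it the finite sum defining the degree could change without warning.
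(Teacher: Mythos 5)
Your proof is correct and follows essentially the same route as the paper's: apply the Inverse Function Theorem at each point of the (finite) fiber to get disjoint source neighborhoods that map diffeomorphically near $y_0$, use the closed-map property of a proper map to rule out preimages coming from the complement of those neighborhoods, and use local constancy of $\sign(\det f')$ to see that the signed count is unchanged. The only cosmetic difference is that you normalize all local diffeomorphisms to share a common target $W_0$, whereas the paper keeps separate targets $V_i$ and intersects them at the end; the two bookkeeping choices are equivalent.
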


\begin{proof}
Fix any $y \in W$ and let $f^{-1}(y) = \{x_1,x_2, \cdots, x_n\}$. Because of
the Inverse Function Theorem, there exist $n$ pairwise disjoint
neighborhoods $U_1, U_2, \ldots, U_n$ of $x_1, x_2, \ldots, x_n$
which are mapped diffeomorphically onto neighborhoods $V_1$, $V_2$, \ldots,
$V_n$ of $y$. We may assume that each $V_i$ is contained in $W$ and that
in each $U_i$ the sign of $\det(f'(x))$ is constant. Therefore, if $z \in V :=
V_1 \cap V_2 \cap \cdots \cap V_n$ and $\Omega := U_1 \cup U_2 \cup \cdots
\cup U_n$, the equation $f(x) = z$ has exactly $n$ solutions in $\Omega$ and 
\[
\deg(f,U,y) = \deg(f,\Omega,z).
\]
Observe now that, when $z \in \R^k$ is sufficiently close to $y$, the equation
$f(x) = z$ has no solutions in $C = \cl U \setminus \Omega$. Indeed,
this  happens if $z$ does not belong to $f(C)$, which is a closed subset of
$\R^k$ not containing $y$. Thus, if $z$ belongs to the neighborhood $V
\setminus f(C)$ of $y$, we obtain
\[
\deg(f,U,z) = \deg(f,\Omega,z) = \deg(f,U,y),
\]
and the assertion is proved.
\end{proof}

Before proving the Homotopy Invariance Property for regular triples we recall
some important facts regarding the family of (ordered) bases in a finite
dimensional vector space.

Let $\Sigma_k = (e_1, e_2, \cdots, e_k)$ denote the standard basis
of $\R^k$; that is,
\[
e_1 = (1, 0, 0, \cdots, 0),\;  e_2 = (0, 1, 0, \cdots, 0),\;.\; .\; .\;,\;
e_k = (0, \cdots, 0, 1).
\]
A basis $B$ of $\R^k$ is said to be {\em
positively oriented} (in $\R^k$) if it is {\em equivalent} to $\Sigma_k$;
meaning that the transition matrix from $\Sigma_k$ to $B$ has positive
determinant. If this is not the case, $B$ is {\em negatively oriented}. The
{\em status} of a basis $B$ of being positively or negatively oriented is
obviously stable, since the transition matrix depends continuously on $B$ in
the topology of $(\R^k)^k$. Moreover, replacing just one vector of a basis $B$
with its opposite makes $B$ pass from one status to the other one (from
positively oriented to negatively oriented or vice-versa). Finally, we point
out that an ordered basis $B$ of $\R^k$ is positively oriented if and only if
so is the basis $(B, e_{k+1})$ of $\R^{k+1}$ obtained by adding to $B$
(regarded as a basis of the subspace $\R^k \times \{0\}$ of $\R^{k+1}$) the
last vector of  the standard basis $\Sigma_{k+1}$ of $\R^{k+1}$.

Given a $k$\mbox{-}dimensional vector space $E$ and a linear isomorphism
$L: E \to \R^k$, by $L^{-1}(\Sigma_k)$ we mean the preimage under $L$ of the
standard basis $\Sigma_k$. This, of course, is a basis of $E$. It is important
to observe that, if $L$ is an automorphism of
$\R^k$, $\det(L) > 0$ if and only if $L^{-1}(\Sigma_k)$ is a positively
oriented basis of $\R^k$. This elementary fact turns out to be crucial in the
following proof.

\begin{proof}[Proof of the Homotopy Invariance Property for regular triples]
Recall first that a triple $(f,U,y)$ is regular if and only
if so is $(f-y,U,0)$, and in this case
\[
\deg(f,U,y) = \deg(f-y,U,0).
\]
Therefore, putting $G(x,\lambda) = H(x,\lambda) - \alpha(\lambda)$, it is
enough to show that the degree of the two regular triples $(G_0,U,0)$
and $(G_1,U,0)$ is the same, where, as usual, $G_\lambda$ denotes the partial
map $G(\cdot,\lambda)$.

Apply Lemma \ref{lemma_locally_constant} to find an open neighborhood $V$ of
$0 \in \R^k$ made up of regular values for both $G_0$ and $G_1$ and such
that
\[
\deg(G_0,U,z) = \deg(G_0,U,0) \quad \mbox{and} \quad
\deg(G_1,U,z) = \deg(G_1,U,0),
\]
for all $z \in V$. Because of Sard's Lemma, there exists a regular value $y
\in V$ for $G$ in $U \times [0,1]$, and not only for the restriction of $G$
to the boundary (in the sense of manifolds) 
\[
\delta(U\times[0,1])=(U \times \{0\})\cup (U \times \{1\}). 
\]
The assertion now follows if we show that
\[
\deg(G_0 - y,U,0) = \deg(G_1 - y,U,0).
\]
Therefore, we are reduced to proving that {\em if $(F,U,0)$ is a smooth
admissible homotopy of triples, and $0$ is a regular value for $F$ and for
the partial maps $F_0$ and $F_1$, then $\deg(F_0,U,0) = \deg(F_1,U,0)$.}

\smallskip
Assume that $F$ is such a homotopy. Since $0$ is a regular value both for $F$
and the restriction of $F$ to $\delta(U \times [0,1])$, the Regularity Theorem 
(see e.g.\ \cite{GuPo,Hir,Mil}) for manifolds with boundary ensures that  
$F^{-1}(0)$ is a compact $1$\mbox{-}dimensional manifold whose boundary is given 
by
\[
\delta F^{-1}(0) = F^{-1}(0) \cap \delta (U \times [0,1]).
\]
The points of $\delta F^{-1}(0)$ can be divided in two classes:
$A_0 = F_0^{-1}(0) \times \{0\}$ and $A_1 = F_1^{-1}(0) \times \{1\}$,
both finite since $0 \in \R^k$ is a regular value for the partial maps $F_0$
and $F_1$. 

Any point in $\delta F^{-1}(0)$ can be given a
sign $+1$ or $-1$ as follows: if $p = (x,\lambda) \in \delta F^{-1}(0)$, we
put $\sign(p) = \sign(\det(F_\lambda'(x)))$. Thus, we need to prove that
\[
\sum_{p \in A_0} \sign(p) = \sum_{p \in A_1} \sign(p).
\]
This will be done by showing that any point $p \in \delta F^{-1}(0)$ has a
unique companion $c(p) \in \delta F^{-1}(0)$ with the property that
$\sign(p) = -\sign(c(p))$ if and only if both $p$ and $c(p)$ belong to the
same side ($A_0$ or $A_1$).

Recall that any smooth, compact, connected $1$\mbox{-}dimensional real
manifold with nonempty boundary (called an {\em arc}) is diffeomorphic to the
interval $[0,1]$.%
\footnote{This is a consequence of a well-known classification theorem for 
smooth $1$\mbox{-}dimensional real manifolds with (possibly empty) boundary. 
See e.g., \cite{GuPo,Mil}.}
Therefore, any $p \in \delta F^{-1}(0)$ is an endpoint of
an arc (the connected component of $\delta F^{-1}(0)$ containing $p$)
having the other endpoint
$c(p)$ still in $\delta F^{-1}(0)$. Incidentally, observe that the self-map
$c$ of $\delta F^{-1}(0)$ is a bijection (in fact, $c^{-1} = c$).

Consider, for example, the case when the endpoints $p_0$ and $p_1$ of an arc
$M$ contained in $F^{-1}(0)$ are both in $A_0$. We need to show that these two
points have opposite sign. The other two cases (both the endpoints in $A_1$,
or one in $A_0$ and the other in $A_1$) can be treated in a similar way, and
their discussion will be omitted.

Roughly speaking, in order to prove that the two endpoints $p_0 = (x_0,0)$ and
$p_1 = (x_1,0)$ of $M$ have opposite sign we move, continuously, a basis $B_t$
of $\R^{k+1}$ along $M$ in such a way that at the departure (for $t=0$) the
basis coincides with
\[
(F_0'(x_0)^{-1}(\Sigma_k),e_{k+1})
\]
and at the arrival (for $t=1$) coincides with
\[
(F_0'(x_1)^{-1}(\Sigma_k),-e_{k+1}),
\]
where, we recall, $F_0$ stands for the partial map $F(\cdot,0)$.
Since $B_t$ is a basis for all $t \in  [0,1]$, the determinant of the
transition matrix from $B_t$ to $\Sigma_{k+1}$ has constant sign. Thus,  the
two bases $B_0$ and $B_1$ turn out to be either both positively oriented or
both negatively oriented. As a consequence of this, if, for example, the basis
$F_0'(x_0)^{-1}(\Sigma_k)$ of $\R^k$ is positively oriented, the other basis
$F_0'(x_1)^{-1}(\Sigma_k)$ must be negatively oriented, and this implies
\[
\sign(\det(F_0'(x_0))) = -\sign(\det(F_0'(x_1))),
\]
showing that the two endpoints of the arc $M$ have opposite sign.

Let $\gamma(t) = (x(t),\lambda(t))$, $t \in [0,1]$, be a parametrization of
the arc $M$. In other words, $\gamma: [0,1] \to M$ is a diffeomorphism
from $[0,1]$ onto $M$, that we may assume to be oriented from $p_0$ to $p_1$
(i.e.\ $\gamma(0) = p_0$ and $\gamma(1) = p_1$). 

Since $F(\gamma(t)) \equiv 0$, we have $F'(\gamma(t)) \gamma'(t) \equiv 0$.
Moreover, the assumption that $\gamma$ is a diffeomorphism implies
$\gamma'(t) \neq 0$ for all $t \in [0,1]$. Therefore, given $t$, the kernel of
the surjective operator $F'(\gamma(t)): \R^{k+1} \to \R^k$, which is
1\hbox{-}dimensional, is spanned by $\gamma'(t)$. This implies that the restriction 
of $F'(\gamma(t))$ to any $k$\mbox{-}dimensional subspace $E$ of $\R^{k+1}$
not containing $\gamma'(t)$ is an isomorphism, and, consequently, the
preimage of the standard basis $\Sigma_k$ of $\R^k$ under this isomorphism is
a basis of $E$.

To simplify the notation, given a point $p$ in $U \times [0,1]$ and a subspace
$E$ of $\R^{k+1}$, if the restriction $F'(p)|_E$ of $F'(p)$ to $E$ is an
isomorphism, the preimage $(F'(p)|_E)^{-1}(\Sigma_k)$ of the standard
basis $\Sigma_k$ will be denoted by $\Sigma(p,E)$.

Since $\gamma'(t) = (x'(t),\lambda'(t))$ is a nonzero vector for any $t \in
[0,1]$ and the partial derivatives $\partial_1 F(x(0),\lambda(0))$ and
$\partial_1 F(x(1),\lambda(1))$ are invertible (recall that $\lambda(0) =
\lambda(1) = 0$, and $0$ is a regular value for the partial map
$F(\cdot,0)$\,), the identity
\[
\partial_1 F(x(t),\lambda(t))x'(t) + \lambda'(t) \partial_2
F(x(t),\lambda(t)) \equiv 0
\]
yields $\lambda'(0) \neq 0$ and $\lambda'(1) \neq 0$. The fact that
$\lambda(t) \in [0,1]$ for all $t \in [0,1]$ actually implies
$\lambda'(0) > 0$ and $\lambda'(1) < 0$. In other words, denoting by $\langle
\cdot, \cdot \rangle$ the inner product in
$\R^{k+1}$, we have   $\langle \gamma'(0), e_{k+1} \rangle > 0$ and $\langle
\gamma'(1), e_{k+1} \rangle < 0$. 

Define a point $p_t$ moving along the arc $M$ by 
\[
p_t =
\begin{cases}
p_0 & \text{if $t \in [0,1/3]$}\\
\gamma(3t - 1) &  \text{if $t \in [1/3,2/3]$}\\
p_1 & \text{if $t \in [2/3,1]$}
\end{cases}
\]

For any $t \in [0,1]$, define the vector $v_t \in \R^{k+1}$ by 
\[
v_t =
\begin{cases}
(1-3t)e_{k+1}+3t\gamma'(0) & \text{if $t \in [0,1/3]$}\\
\gamma'(3t - 1) &  \text{if $t \in [1/3,2/3]$}\\
(3-3t)\gamma'(1)-(3t-2)e_{k+1} & \text{if $t \in [2/3,1]$}
\end{cases}
\]

Observe that $v_t \neq 0$ for any $t \in [0,1]$. Thus the orthogonal space
$v_t^\perp$ to $v_t$ is always $k$\mbox{-}dimensional. Let us prove that the
restriction of the derivative $F'(p_t)$ to $v_t^\perp$ is an isomorphism for
all $t \in [0,1]$. For this purpose, given $t \in [0,1]$, we need to show
that $v_t^\perp$ does not contain the one dimensional kernel of $F'(p_t)$.
Since $0 \in \R^k$ is a regular value for $F$, this kernel coincides with the
tangent space to $M$ at $p_t$, which is spanned by $\gamma'(0)$ if $t \in
[0,1/3]$, by $\gamma'(3t - 1)$ if $t \in [1/3,2/3]$ and by $\gamma'(1)$ if
$t \in [2/3,1]$.

\smallskip
Consider first the case of $t \in [0,1/3]$. We need to show that
$\gamma'(0)$ does not belong to $v_t^\perp$, which means $\langle v_t,
\gamma'(0) \rangle \neq 0$. In fact, since $\lambda'(0) > 0$, we have
\[
\langle v_t, \gamma'(0) \rangle =
(1-3t)\lambda'(0) + 3t \|\gamma'(0)\|^2 > 0. 
\]

If $t \in [1/3,2/3]$,
\[
\langle v_t, \gamma'(3t-1) \rangle =
\|\gamma'(3t-1)\|^2 > 0.
\]

Finally, it $t \in [2/3,1]$, $\lambda'(1)$ being negative, one gets
\[
\langle v_t, \gamma'(1) \rangle =
(3-3t)\|\gamma'(1)\|^2 - (3t-2)\lambda'(1) > 0. 
\]

Let $t \in [0,1]$. Since, as claimed, the restriction of $F'(p_t)$ to
$v_t^\perp$ is an isomorphism, it makes sense to define the following basis
of $\R^{k+1}$:
\[
B_t = (\Sigma(p_t, v_t^\perp), v_t).
\]

Clearly $B_t$ depends continuously on $t \in [0,1]$, as a map into
$\left(\R^{k+1}\right)^{k+1}$. Observe also that the spaces $v_0^\perp$ and $v_1^\perp$
coincide with $\R^k \times \{0\}$. Therefore, identifying $\R^k$
with the subspace $\R^k \times \{0\}$ of $\R^{k+1}$, we have
\[
B_0 = (F_0'(x_0)^{-1}(\Sigma_k),e_{k+1}) \quad \mbox{and} \quad
B_1 = (F_0'(x_1)^{-1}(\Sigma_k),-e_{k+1}).
\]
Now, as already pointed out, the fact that $B_t$ is always a basis for
$\R^{k+1}$ implies that $B_0$ and $B_1$ are either both positively oriented
or both negatively oriented. Consequently, $B_0$ and
\[
B_1^- = (F_0'(x_1)^{-1}(\Sigma_k),e_{k+1})
\]
have opposite orientation, which implies that also the two bases
\[
F_0'(x_0)^{-1}(\Sigma_k) \quad \mbox{and} \quad
F_0'(x_1)^{-1}(\Sigma_k)
\]
have opposite orientation. Thus,
\[
\sign(\det(F_0'(x_0))) = -\sign(\det(F_0'(x_1))),
\]
and the two endpoints $p_0$ and $p_1$ of the arc $M$ have opposite sign, as
claimed.
\end{proof}

\end{document}